\newtheorem{theorem}{Theorem}[section]
\newtheorem*{theorem*}{Theorem}
\newtheorem{corollary}[theorem]{Corollary}
\newtheorem{lemma}[theorem]{Lemma}
\newtheorem{rem}[theorem]{Remark}
\newtheorem{proposition}[theorem]{Proposition}
\newtheorem{question}{Question}[section]
\theoremstyle{definition}
\newtheorem{definition}[theorem]{Definition}
\newcommand{\ee}{\varepsilon}
\newcommand{\nn}{\mathbb{N}}
\newcommand{\supp}{\text{supp}}
\begin{document}

\title{A $\xi$-weak Grothendieck  compactness principle}
\author{K. Beanland, R.M. Causey}

\begin{abstract} For $0\leqslant \xi\leqslant \omega_1$, we define the notion of $\xi$-weakly precompact and $\xi$-weakly compact sets in Banach spaces and prove that a set is $\xi$-weakly precompact if and only if its weak closure is $\xi$-weakly compact.  We prove a quantified version of Grothendieck's compactness principle and the characterization of Schur spaces obtained in \cite{DFLORT} and \cite{JLO}. For $0\leqslant \xi\leqslant \omega_1$,  we prove that a Banach space $X$ has the $\xi$-Schur property if and only if every $\xi$-weakly compact set is contained in the closed, convex hull of a weakly null (equivalently, norm null) sequence. The $\xi=0$ and $\xi=\omega_1$ cases of this theorem are the theorems of Grothendieck and \cite{DFLORT}, \cite{JLO}, respectively.

\end{abstract}

\maketitle

\section{Introduction}

The following theorem is known as the Grothendieck compactness principle.

\begin{theorem} Any norm compact subset of any Banach space lies in the closed, convex hull of a norm null sequence.

\label{Groth}
\end{theorem}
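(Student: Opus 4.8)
The plan is to exploit total boundedness of the compact set $K$ directly. For a rapidly decreasing sequence $\delta_n\downarrow 0$ to be chosen, I first select, for each $n$, a finite $\delta_n$-net $F_n$ for $K$ together with a selection $f_n\colon K\to F_n$ satisfying $\|x-f_n(x)\|\leqslant\delta_n$. Since $f_n(x)\to x$ in norm for every $x\in K$, telescoping yields the norm-convergent expansion
\[
x=f_1(x)+\sum_{n\geqslant 2}\bigl(f_n(x)-f_{n-1}(x)\bigr),
\]
in which each increment $u_n(x):=f_n(x)-f_{n-1}(x)$ lies in the \emph{finite} difference set $G_n:=F_n-F_{n-1}$ and satisfies $\|u_n(x)\|\leqslant\delta_n+\delta_{n-1}\leqslant 2\delta_{n-1}$.

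The second step converts this series into membership in the closed convex hull of a single null sequence. Rewriting $x=\sum_{n\geqslant 1}2^{-n}\bigl(2^n u_n(x)\bigr)$, I rescale each block: put $\mathcal W:=\{0\}\cup\bigcup_n\{2^n u:u\in G_n\}$, a countable set. Each $x\in K$ is then the sum of the convergent series $\sum_n 2^{-n}w_n(x)$ with $w_n(x)=2^nu_n(x)\in\mathcal W$ and weights $2^{-n}$ summing to $1$; since such a series is a norm limit of genuine finite convex combinations of elements of $\mathcal W$ (normalize the partial sums, or place the defect weight on $0$), we obtain $x\in\overline{\mathrm{conv}}(\mathcal W)$. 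It remains to check that an enumeration of $\mathcal W$ is norm null, and this is where the choice of $\delta_n$ enters: taking $\delta_n=4^{-n}$ gives $\|2^n u\|\leqslant 2^{n}\cdot 2\delta_{n-1}\leqslant 8\cdot 2^{-n}$ for $u\in G_n$ with $n\geqslant 2$, so for each $\varepsilon>0$ only finitely many indices $n$ contribute elements of norm $\geqslant\varepsilon$. As every $G_n$ is finite, all but finitely many members of $\mathcal W$ have norm below $\varepsilon$, and $\mathcal W$ can be enumerated as a null sequence.

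I expect the main obstacle to be organizational rather than deep: one must balance the net fineness against the convex weights so that the \emph{rescaled} increments $2^n u_n(x)$ still tend to $0$, which forces the faster-than-$2^{-n}$ decay $\delta_n=4^{-n}$, while simultaneously keeping the building blocks confined to finite sets. The tempting alternative — approximating $K$, then approximating the remainder set $\{x-f_1(x):x\in K\}$, and iterating — is precisely what should be avoided, since nearest-point selections in a general Banach space need not be continuous and the successive remainder sets need not remain totally bounded. Using fixed nets $F_n$ for $K$ itself and telescoping sidesteps this difficulty entirely, and boundedness of $K$ (automatic from compactness) handles the single coarse block $G_1=F_1$.
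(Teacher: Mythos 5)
Your proof is correct, but note that the paper itself offers no proof of Theorem \ref{Groth}: it is quoted as the classical Grothendieck compactness principle and used as a black box (only its converse-type extensions, Theorems \ref{non} and \ref{main}, are proved in the paper). So there is no internal argument to compare against; judged on its own, your write-up is essentially the standard proof of this classical fact, and all the delicate points are handled properly. The telescoping $x=f_1(x)+\sum_{n\geqslant 2}\bigl(f_n(x)-f_{n-1}(x)\bigr)$ through a fixed family of finite nets is exactly the right way to avoid the non-compactness of remainder sets, which you correctly identify as the trap. The numerical bookkeeping checks out: with $\delta_n=4^{-n}$ one gets $\|2^nu\|\leqslant 2^n\cdot 2\delta_{n-1}=8\cdot 2^{-n}$ for $u\in G_n$, $n\geqslant 2$, so for each $\varepsilon>0$ only finitely many members of the countable set $\mathcal{W}$ have norm at least $\varepsilon$, and hence \emph{any} enumeration of $\mathcal{W}$ is norm null. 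Two details worth keeping explicit in a final version, since they are the only places a reader could stumble: first, the inclusion of $0$ in $\mathcal{W}$ (or normalization of partial sums) is genuinely needed so that the partial sums $\sum_{n=1}^N 2^{-n}w_n(x)$ become honest convex combinations, with the weight deficit $2^{-N}$ placed on $0$; second, the $n=1$ block contributes the finitely many vectors $2f_1(x)\in 2F_1$, which are bounded because $K$ is bounded, and finiteness of this block is what lets it coexist with nullity of the enumeration. Neither point is a gap; both are already present in your argument.
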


In \cite{DFLORT}, the authors considered the validity of the resulting statement when the norm topology is replaced by the weak topology. We recall that a Banach space has the Schur property if weakly compact sets coincide with norm compact sets (equivalently,  weakly null sequences coincide with norm null sequences). This, combined with Grothendieck's theorem, gives the easy direction of the following result.

\begin{theorem} Given a Banach space $X$, every weakly compact set in $X$ lies in the closed, convex hull of a weakly null sequence in $X$ if and only if $X$ has the Schur property. 

\label{non}
\end{theorem}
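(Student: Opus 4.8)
The backward implication is the ``easy direction'' indicated in the text: if $X$ has the Schur property then the weakly compact sets are precisely the norm compact sets, so any weakly compact $K$ is norm compact and Theorem \ref{Groth} places it in the closed, convex hull of a norm null sequence; since a norm null sequence is weakly null, this is exactly the desired conclusion. Thus the whole content lies in the forward implication, which I would prove in contrapositive form: assuming $X$ fails the Schur property, I would produce a single weakly compact set that lies in the closed, convex hull of no weakly null sequence.

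The starting data is a normalized weakly null sequence. Failure of the Schur property yields a weakly null $(z_n)$ with $\inf_n\|z_n\|>0$; passing to a subsequence and normalizing gives $(x_n)$ with $\|x_n\|=1$ and $x_n\to 0$ weakly, and by the Bessaga--Pe{\l}czy{\'n}ski selection principle I may assume $(x_n)$ is basic, with coordinate functionals extended to $X^*$. The engine of the construction is two observations about a weakly null sequence $(y_k)$ and its hull $C=\overline{\operatorname{conv}}\{y_k\}$. First, $\bigcap_N\overline{\operatorname{conv}}\{y_k:k\ge N\}=\{0\}$: if $z\neq 0$ lay in every tail, a norming functional $f$ for $z$ would satisfy $\|z\|=f(z)\le\sup_{k\ge N}f(y_k)\to 0$, a contradiction. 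Second, a mass-concentration estimate: if $f\in X^*$ and $t>0$, then since $f(y_k)\to 0$ only finitely many indices $k$ have $f(y_k)$ non-negligible, so any $x\in C$ with $f(x)\ge t$ is approximated by a convex combination of the $y_k$ placing a definite amount of mass, bounded below independently of $x$, on that fixed finite initial block. The plan is to build a weakly compact test set $K$ whose points sit on infinitely many ``independent'' slices $\{f_j\ge t\}$ while $K$ weakly accumulates, so that capturing all of $K$ in one hull $C$ would force the fixed finite block to carry positive mass for infinitely many almost disjoint points, contradicting that the generating sequence is bounded and weakly null. The natural candidate is the weak closure of a triangular array formed from $(x_n)$.

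The main obstacle is precisely this construction and the verification that the resulting weakly compact set escapes every weakly null hull. The two lemmas above are hereditary under passing to subsets and give the right localization of mass, but, as the example of $c_0$ shows, naive candidates such as $\overline{\operatorname{conv}}\{x_1+x_j:j\ge 2\}$ are in fact capturable (here by the weakly null sequence $(2x_j)$), so the test set must be genuinely more complex and its non-capturability is where the real work lies. I expect this to require the quantified form of Grothendieck's principle developed in the paper, which converts the qualitative localization above into uniform estimates along the array and thereby rules out any single weakly null generating sequence. The forward implication then follows, since the existence of such a weakly compact set contradicts the hypothesis that every weakly compact set lies in the closed, convex hull of a weakly null sequence.
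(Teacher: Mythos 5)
Your easy direction is correct, but the hard direction---which you yourself identify as ``the whole content''---is not proved: the proposal stops exactly where the proof has to begin. The ``triangular array'' is never specified, the weakly compact test set $K$ is never exhibited, and no non-capturability argument is given. Instead you defer to ``the quantified form of Grothendieck's principle developed in the paper,'' and that deferral is circular for this statement: the paper's quantified theorem (Theorem \ref{main}) contains the present theorem as its $\xi=\omega_1$ case, a case the paper does not reprove but attributes to \cite{DFLORT} and \cite{JLO}; moreover, even the paper's proof of the $0<\xi<\omega_1$ cases ends by following the proof of \cite[Theorem 1]{DFLORT} verbatim. So the real work here cannot be outsourced to the paper's machinery; it \emph{is} the basic-sequence argument of \cite{DFLORT} (or the operator-theoretic one of \cite{JLO}), and none of it appears in your writeup.

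The missing construction is concrete and short to state. With $(x_n)_{n=1}^\infty$ your normalized weakly null sequence, let $K_0=\overline{\text{co}}(x_n:n\in\nn)$; by the Krein--\v{S}mulian theorem $K_0$ is weakly compact (for $\xi=\omega_1$ no passing to a subsequence is needed---that subtlety, addressed by Proposition \ref{convex}, only arises for $0<\xi<\omega_1$). Put $K_n=nK_0\cap \frac{1}{n}B_X$ and $K=\bigcup_{n=1}^\infty K_n$. Each $K_n$ is weakly compact, and since $\bigcap_{n=1}^\infty\bigcup_{m=n}^\infty K_m=\{0\}$, any sequence in $K$ has either a subsequence lying in a single $K_n$ or a subsequence converging to $0$ in norm; hence $K$ is weakly compact. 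The scaling trick is precisely the idea your sketch lacks: $K$ contains, at every scale $n$, vectors that are $n$ times convex combinations of the $x_i$ yet have norm at most $1/n$, and the argument of \cite{DFLORT} shows that the closed convex hull of no single weakly null sequence can contain all of these. Your two preparatory observations (tail hulls intersect in $\{0\}$; mass concentration on a fixed initial block under a slice condition) are correct but are only the soft opening moves of such an argument; as your own $c_0$ example shows, they do not by themselves distinguish capturable sets from non-capturable ones, so the proposal contains no proof of the forward implication.
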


The hard direction of Theorem \ref{non} was shown in \cite{DFLORT} by basic sequence techniques. Later, in \cite{JLO}, a second proof was given using an operator theoretic approach.

For $0\leqslant \xi\leqslant \omega_1$, using the notion of $\xi$-weak nullity defined in \cite{AMerTs-Israel}, for each Banach space $X$, we will define a topology $\tau_\xi$ on $X$. We will  study the compact and precompact sets in this topology, which we call $\xi$-weakly compact and $\xi$-weakly precompact sets. We give all necessary definitions in Section $2$.    

The $0$-weakly null sequences are precisely the norm null sequences, from which it will follow that the $0$-weakly (pre)compact sets are precisely the are precisely the norm (pre)compact sets. The $\omega_1$-weakly null sequences are precisely the weakly null sequences. Combining this fact with the Eberlein-\v{S}mulian theorem, the $\omega_1$-weakly (pre)compact sets are precisely the weakly (pre)compact sets.

A Banach space is said to have the $\xi$-Schur property if every $\xi$-weakly null sequence in $X$ is norm null. Every Banach space is $0$-Schur, and $\omega_1$-Schur spaces are precisely  the Schur spaces. For $0\leqslant \xi\leqslant \omega_1$, let $\textsf{V}_\xi$ denote the class of $\xi$-Schur Banach spaces.    These classes are such that if $\xi\leqslant \zeta\leqslant \omega_1$, then $\textsf{V}_\zeta\subset \textsf{V}_\xi$, and the classes $$\textsf{V}_0, \textsf{V}_{\omega^\xi}, 0\leqslant \xi\leqslant \omega_1$$ are all distinct. We remark that this is a complete list of distinct classes from the list $\textsf{V}_\xi$, $0\leqslant \xi\leqslant \omega_1$. More precisely, if $\omega^\xi<\zeta<\omega^{\xi+1}$, $\textsf{V}_\zeta=\textsf{V}_{\omega^\xi}$. 

Our main result is to prove the $0<\xi<\omega_1$ cases of the following theorem, which interpolates between Theorem \ref{Groth}, the $\xi=0$ case, and Theorem \ref{non}, the $\xi=\omega_1$ case. 

\begin{theorem} Let $X$ be a Banach space and let $\xi\leqslant \omega_1$ be an ordinal. Then every $\xi$-weakly compact subset of $X$ lies in the closed, convex hull of a $\xi$-weakly null sequence in $X$ if and only if $X$ is $\xi$-Schur. 

\label{main}

\end{theorem}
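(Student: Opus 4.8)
The plan is to prove the two implications separately: the backward (easy) direction by reduction to Grothendieck's principle (Theorem \ref{Groth}), and the forward (hard) direction by contraposition, where the genuine work lies.

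For the easy direction, assume $X$ is $\xi$-Schur and let $K$ be $\xi$-weakly compact; I first claim $K$ is norm compact. Using the sequential description of $\tau_\xi$-convergence together with the precompactness characterization advertised in the abstract (an Eberlein--\v{S}mulian-type statement for $\tau_\xi$), every sequence in $K$ has a $\tau_\xi$-convergent subsequence; the difference of such a subsequence with its limit is $\xi$-weakly null, hence norm null because $X$ is $\xi$-Schur. Thus $K$ is norm sequentially compact, hence norm compact, and Grothendieck's theorem supplies a norm null sequence $(z_n)$ with $K\subseteq\overline{\operatorname{co}}(\{z_n\})$. Since norm nullity is exactly the $\xi=0$ case and the family of $\xi$-weakly null sequences only enlarges with $\xi$, the sequence $(z_n)$ is $\xi$-weakly null, which is what is required.

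For the hard direction I would argue the contrapositive: assuming $X$ is not $\xi$-Schur, I must produce a $\xi$-weakly compact set lying in the closed convex hull of no $\xi$-weakly null sequence. Failure of $\xi$-Schur yields a $\xi$-weakly null sequence $(x_n)$ with $\inf_n\|x_n\|=\delta>0$. The reduction I would exploit is the operator reformulation behind the quantified Grothendieck principle: a bounded set $K$ is contained in $\overline{\operatorname{co}}(\{\pm y_n\}\cup\{0\})$ for a $\xi$-weakly null $(y_n)$ precisely when $K\subseteq\overline{T(B_{\ell_1})}$ for the bounded operator $T\colon\ell_1\to X$ with $Te_n=y_n$. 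Because $\xi$-weak nullity implies weak nullity (the $\xi=\omega_1$ case), for each $f\in X^*$ one has $f(y_n)\to 0$, so the adjoint $T^*$ maps $X^*$ not merely into $c_0$ but into the ``$\xi$-small'' subspace of $c_0$ cut out by the Schreier family $\mathcal{S}_\xi$ that governs $\tau_\xi$.

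The heart of the matter is then to build $K$ from $(x_n)$ so that any covering operator $T$ as above forces $T^*$ to be an isomorphic embedding of a ``$\xi$-reflexively large'' space into this $\xi$-small subspace of $c_0$, which a Schreier-index estimate rules out --- exactly as $\ell_2$ fails to embed into $c_0$ in the classical $\xi=\omega_1$ case underlying Theorem \ref{non}. Concretely, after passing to a basic subsequence and invoking a Krein--\v{S}mulian-type stability of $\xi$-weak compactness under closed convex hulls (so that the relevant sets are indeed $\xi$-weakly compact), I expect $K$ to arise as a suitably weighted symmetric convex set on the $(x_n)$ whose $\mathcal{S}_\xi$-complexity strictly exceeds what the closed convex hull of a single $\xi$-weakly null sequence can contain.

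The main obstacle is precisely this construction and its index estimate: one must exhibit a $\xi$-weakly compact set whose covering by $\overline{\operatorname{co}}$ of a $\xi$-weakly null sequence is obstructed, and verify the obstruction through the ordinal bookkeeping of the Schreier families. By contrast, the easy direction and the operator reduction are routine once the $\tau_\xi$ machinery of Section 2 is available; the quantified Grothendieck principle is what converts the covering problem into an embedding problem, and the transfinite counting needed to defeat that embedding for $0<\xi<\omega_1$ is the genuinely new difficulty, since at the endpoints $\xi=0$ and $\xi=\omega_1$ it collapses to compactness of scalar sequences and to $c_0$-saturation, respectively.
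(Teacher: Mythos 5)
Your easy direction is correct and coincides with the paper's chain (i)$\Rightarrow$(ii)$\Rightarrow$(iii)$\Rightarrow$(iv) of Remark \ref{mer}: $\xi$-Schur plus $\xi$-weak compactness yields norm compactness, Theorem \ref{Groth} applies, and norm null sequences are $\xi$-weakly null. The hard direction, however, breaks at its central hinge. You invoke ``a Krein--\v{S}mulian-type stability of $\xi$-weak compactness under closed convex hulls (so that the relevant sets are indeed $\xi$-weakly compact),'' but no such stability exists for $0<\xi<\omega$: Section 4 of the paper constructs, for each $k\in\nn$, a space whose normalized $1$-unconditional basis is $k+1$-weakly null yet admits a convex block sequence which is an $\ell_1^{k+1}$-spreading model (Theorem \ref{build}); consequently the closed convex hull of the $k+1$-weakly compact set consisting of the basis vectors together with $0$ fails even to be $k+1$-weakly precompact (Proposition \ref{crit}, Corollary \ref{pelc}). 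This failure is precisely the obstacle the paper flags as the reason the argument of \cite{DFLORT} cannot be transplanted verbatim to $0<\xi<\omega_1$. What is true, and what the paper actually uses, is the strictly weaker Proposition \ref{convex}: a $\xi$-weakly null sequence admits a \emph{subsequence} whose closed convex hull is $\xi$-weakly compact. That proposition is not soft; its proof passes through the operator ideal $\mathfrak{W}_\xi$ of \cite{BCFrWa-JFA}, results implicit in \cite{CN}, and Proposition \ref{barbarian}. Nothing in your sketch supplies a substitute for it.

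The second gap is that the step you yourself call ``the heart of the matter'' --- building the obstruction set $K$ and defeating every covering operator $T$ by a Schreier-index estimate --- is stated as an expectation, not an argument, so no contradiction is ever derived. It is worth noting that the paper shows this step requires no new transfinite counting at all: once one has a subsequence with $K_0=\overline{\text{co}}(x_{k_n}:n\in\nn)$ $\xi$-weakly compact, one sets $K_n=nK_0\cap \frac{1}{n}B_X$ and $K=\bigcup_{n=1}^\infty K_n$; each $K_n$ is a weakly closed subset of the $\xi$-weakly compact set $nK_0$, hence $\xi$-weakly compact, the identity $\bigcap_{n=1}^\infty\bigcup_{m=n}^\infty K_m=\{0\}$ gives $\xi$-weak compactness of $K$, and from there the proof of \cite[Theorem 1]{DFLORT} applies word for word. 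Your operator reformulation (covering by $\overline{\text{co}}(\{\pm y_n\}\cup\{0\})$ versus covering by $\overline{TB_{\ell_1}}$) is the correct starting point of the alternative, operator-theoretic proof in \cite{JLO} of Theorem \ref{non}, but without the subsequence stabilization above and a worked-out replacement for the $c_0$-embedding obstruction, the proposal does not constitute a proof of the $0<\xi<\omega_1$ cases.
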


\begin{rem}\upshape We note that we will actually prove something stronger than what is stated in the theorem. We will prove that for $0<\xi<\omega_1$ and a Banach space $X$, the following are equivalent. 

\begin{enumerate}[(i)]\item $X$ has the $\xi$-Schur property.  \item Every $\xi$-weakly compact subset of $X$ is contained in the closed, convex hull of a $0$-weakly null (norm null) sequence.  \item Every $\xi$-weakly compact subset of $X$ is contained in the closed, convex hull of a $\xi$-weakly null sequence.  \item Every $\xi$-weakly compact subset of $X$  is contained in the closed, convex hull of an $\omega_1$-weakly null (weakly null)  sequence.  \end{enumerate}
\label{mer}
\end{rem}

With the aid of Theorem \ref{Groth} and the nature of the weakly null hierarchy, it is easy to see that $(i)\Rightarrow (ii)\Rightarrow (iii)\Rightarrow (iv)$. In Section 2, we provide the requisite definitions and background results and prove $(iv)\Rightarrow (i)$, completing the proof of the equivalences.

In Section 3, we define the $\tau_\xi$ topology and prove the following topological results.  

\begin{theorem} For each Banach space $X$ and $0\leqslant \xi<\omega_1$, there exists a topology $\tau_\xi$ on $X$ such that the $\xi$-weakly compact subsets of $X$ are the compact subsets of $X$ in the $\tau_\xi$ topology, and the $\xi$-weakly precompact subsets of $X$ are the relatively compact subsets of $X$ in the $\tau_\xi$ topology. 

\label{topology}

\end{theorem}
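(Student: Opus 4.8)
The plan is to manufacture $\tau_\xi$ directly from the convergence structure of $\xi$-weakly null sequences rather than to guess a subbasis. I would declare a set $C\subseteq X$ to be $\tau_\xi$-closed exactly when it is sequentially closed under $\xi$-weak limits: whenever $(x_n)\subseteq C$ and $x_n-x$ is $\xi$-weakly null, then $x\in C$. The first task is to check that the family of such sets is the collection of closed sets of a topology. Stability under arbitrary intersections is immediate. For finite unions I would use two properties of the convergence recorded in Section 2, namely that every subsequence of a $\xi$-weakly null sequence is again $\xi$-weakly null and that $\xi$-weak limits are unique: given $\tau_\xi$-closed $C_1,C_2$ and $(x_n)\subseteq C_1\cup C_2$ with $x_n\to x$ $\xi$-weakly, a pigeonhole argument yields a subsequence lying in, say, $C_1$, which still converges $\xi$-weakly to $x$, whence $x\in C_1\subseteq C_1\cup C_2$.

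Before analysing compactness I would locate $\tau_\xi$ between the weak and norm topologies and relate its convergent sequences to $\xi$-weak convergence. Since for $\xi<\omega_1$ every $\xi$-weakly null sequence is weakly null and every norm null sequence is $\xi$-weakly null, every weakly closed set is $\tau_\xi$-closed and every $\tau_\xi$-closed set is norm closed; hence $w\subseteq\tau_\xi\subseteq\|\cdot\|$. The subtle point is that a topology defined through a one-step sequential closure can a priori admit more convergent sequences than the convergence used to build it. I would therefore verify that $\xi$-weak convergence is \emph{topological}, i.e. that it satisfies the iterated-limit (Urysohn) axiom: a sequence is $\xi$-weakly null as soon as each of its subsequences has a further subsequence that is $\xi$-weakly null. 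Once this is in hand, the standard theory of sequential convergence classes identifies the $\tau_\xi$-convergent sequences as precisely the $\xi$-weakly convergent ones.

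The heart of the proof is the identification of compact sets. Let $K$ be $\xi$-weakly compact; then $K$ is weakly compact, so by Eberlein--\v{S}mulian $(K,w)$ is angelic, in particular Fréchet--Urysohn. I first show that on $K$ weak and $\xi$-weak convergence of sequences coincide: if $(x_n)\subseteq K$ converges weakly to $x$, then by $\xi$-weak compactness each subsequence has a further subsequence converging $\xi$-weakly to a point of $K$, necessarily $x$ by uniqueness of weak limits, and the Urysohn axiom makes $(x_n)$ itself $\xi$-weakly convergent to $x$. I then claim $\tau_\xi|_K=w|_K$. One inclusion is free from $\tau_\xi\supseteq w$. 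For the other, let $C$ be $\tau_\xi|_K$-closed, say $C=K\cap D$ with $D$ closed under $\xi$-weak limits, and let $x\in\overline{C}^{\,w}\cap K$; Fréchet--Urysohnness of $w|_K$ supplies a sequence in $C$ converging weakly, hence $\xi$-weakly, to $x$, so $x\in D\cap K=C$, proving $C$ is $w$-closed. As $w|_K$ is compact, $\tau_\xi|_K=w|_K$ is compact, giving $\xi$-weakly compact $\Rightarrow\tau_\xi$-compact. For the converse, if $K$ is $\tau_\xi$-compact then $K$ is weakly compact (passing to the coarser $w$), and I would deduce $\xi$-weak compactness by producing, from a $\tau_\xi$-cluster point of an arbitrary sequence in $K$, a subsequence that $\tau_\xi$-converges, hence $\xi$-weakly converges, to a point of $K$.

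Finally, the precompact statement follows from the compact one together with the result announced in the abstract that $A$ is $\xi$-weakly precompact if and only if $\overline{A}^{\,w}$ is $\xi$-weakly compact. If $A$ is $\xi$-weakly precompact then $\overline{A}^{\,w}$ is $\tau_\xi$-compact and $\tau_\xi$-closed, so the smaller set $\overline{A}^{\,\tau_\xi}$ is a $\tau_\xi$-closed subset of it and hence $\tau_\xi$-compact; conversely a $\tau_\xi$-relatively compact set sits inside a $\xi$-weakly compact set and is therefore $\xi$-weakly precompact. I expect the main obstacle to lie in the two convergence-theoretic points behind the compactness argument: verifying the Urysohn axiom for $\xi$-weak convergence (so that $\tau_\xi$ faithfully encodes the intended convergence and $\tau_\xi$-limits are genuinely $\xi$-weak limits), and, for the converse direction, transferring enough angelicity from the weak topology to $\tau_\xi$ so that $\tau_\xi$-compactness forces $\tau_\xi$-sequential compactness. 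This last transfer is delicate precisely because a merely weakly compact set need not be $\xi$-weakly compact, so the clean coincidence $\tau_\xi|_K=w|_K$ is not available a priori in that direction; everything else reduces to bookkeeping with the weakly null hierarchy.
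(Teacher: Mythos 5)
Your construction of $\tau_\xi$ (closed sets $=$ sets sequentially closed under $\xi$-weak limits), the verification of the topology axioms, the placement of $\tau_\xi$ between the weak and norm topologies, and the identification of $\tau_\xi$-convergent sequences with $\xi$-weakly convergent ones are all exactly the paper's route; the Urysohn axiom you flag does hold, and for an easy reason you could have recorded: weak convergence is topological, and every subsequence of an $\ell_1^\xi$-spreading model is again an $\ell_1^\xi$-spreading model (since $\mathcal{S}_\xi$ is spreading), so a sequence each of whose subsequences has a $\xi$-weakly null further subsequence is itself $\xi$-weakly null. Your forward implication ($\xi$-weakly compact $\Rightarrow$ $\tau_\xi|_K=w|_K$ $\Rightarrow$ $\tau_\xi$-compact, via angelicity of weakly compact sets) is correct, and the precompactness statement does reduce to the compactness statement via Proposition \ref{conan} as you say.

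The genuine gap is the converse implication, $\tau_\xi$-compact $\Rightarrow$ $\xi$-weakly compact, which you acknowledge but do not close. Your plan of ``producing, from a $\tau_\xi$-cluster point of an arbitrary sequence in $K$, a subsequence that $\tau_\xi$-converges'' is not a valid step: $\tau_\xi$ is not first countable and is not known (at this point) to be Fr\'echet--Urysohn on such a $K$, and in a general topological space a cluster point of a sequence need not be the limit of any subsequence. Moreover, your closing remark that ``the clean coincidence $\tau_\xi|_K=w|_K$ is not available a priori in that direction'' has it backwards: in that direction the coincidence is free, by the elementary fact that a continuous bijection from a compact space onto a Hausdorff space is a homeomorphism. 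If $K$ is $\tau_\xi$-compact, the identity $(K,\tau_\xi|_K)\to (K,w|_K)$ is a continuous bijection from a compact space onto a Hausdorff space, hence a homeomorphism; thus $\tau_\xi|_K=w|_K$ and $K$ is weakly compact. By Eberlein--\v{S}mulian, $(K,w|_K)$ is sequentially compact, hence so is $(K,\tau_\xi|_K)$, and since $\tau_\xi$-convergence of sequences is $\xi$-weak convergence, every sequence in $K$ has a subsequence $\xi$-weakly convergent to a member of $K$; that is, $K$ is $\xi$-weakly compact. This is precisely the content of the paper's Lemma \ref{magic}, whose ``sequentially compact domain'' variant (using Eberlein--\v{S}mulian in the codomain to convert sequential compactness of images into compactness, hence closedness) also yields your forward direction without invoking angelicity. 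With that substitution your outline becomes a complete proof.
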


In Section 4, we use a construction from \cite{LA} to investigate  convex hulls and $\xi$-weak compactness. We show that for $0<\xi<\omega$, in the $\tau_\xi$ topology,  there exist Banach spaces in which the $\tau_\xi$ topology is not locally convex, and in which the closed, convex hull of a compact set need not be compact.

Section 5 concludes with several examples to show the richness of the classes of $\xi$-Schur spaces.

\section{Definitions}

Throughout, all Banach spaces will be over the scalar field $\mathbb{K}$, which is either the real or complex numbers. 

Throughout, if $M$ is an infinite subset of $\nn$, $[M]^{<\omega}$ (resp. $[M]$) denotes the set of all finite (resp. infinite) subsets of $M$.  We identify each subset $E$ of $\nn$ with the sequence obtained by listing the members of $E$ in strictly increasing order. We let $E<F$ denote the relation that $\max E<\min F$, with the convention that $\max \varnothing=0$ and $\min \varnothing=\infty$. We let $E\preceq F$ denote the relation that $E$ (treated as a sequence) is an initial segment of $F$.  We let $E\prec F$ denote the concatenation of $E$ with $F$.  We say a subset $\mathcal{F}$ of $[\nn]^{<\omega}$ is \begin{enumerate}[(i)]\item \emph{hereditary} if $E\subset F\in \mathcal{F}$ implies $E\in \mathcal{F}$, \item \emph{spreading} if $(m_i)_{i=1}^t\in \mathcal{F}$ and $m_i\leqslant n_i$ implies $(n_i)_{i=1}^t\in \mathcal{F}$,  \item \emph{compact} if $\{1_E: E\in \mathcal{F}\}$ is compact in $\{0,1\}^\nn$ with its product topology, \item \emph{regular} if it is hereditary, spreading, and compact.  \end{enumerate} We say a sequence $(E_i)_{i=1}^t$ of finite subsets of $\nn$ is $\mathcal{F}$-\emph{admissible} provided that $E_1<\ldots <E_t$ and $(\min E_i)_{i=1}^t\in \mathcal{F}$.

We recall the \emph{Schreier families} $\mathcal{S}_\xi$, $\xi<\omega_1$.  We let $$\mathcal{S}_0=\{\varnothing\}\cup \{(n): n\in\nn\},$$ $$\mathcal{S}_{\xi+1}= \{\varnothing\}\cup \Bigl\{\bigcup_{i=1}^n E_i: E_1<\ldots <E_n, n\leqslant \min E_1, E_i\in \mathcal{S}_\xi\Bigr\},$$ and if $\xi$ is a limit ordinal, there exists $\xi_n\uparrow \xi$ such that $$\mathcal{S}_\xi=\{E: (\exists n\in \nn)(n\leqslant E\in \mathcal{S}_{\xi_n}\}.$$  We also recall the Schreier spaces $X_\xi$.   The space $X_\xi$ is the completion of $c_{00}$ with respect to the norm $$\|x\|=\sup \Bigl\{\sum_{n\in E} |x_n|: E\in\mathcal{S}_\xi\}.$$  Here, $c_{00}$ is the space of all finitely supported scalar sequences and $x=(x_n)_{n=1}^\infty$. 

For $0<\xi<\omega_1$, we say a sequence $(x_n)_{n=1}^\infty$ in a Banach space is an $\ell_1^\xi$-\emph{spreading model} if $$0< \inf \{\|x\|: E\in \mathcal{S}_\xi, x=\sum_{n\in E}a_n x_n, 1=\sum_{n\in E}|a_n|\}.$$  It is easy to see that for $0<\xi<\omega_1$,  the canonical $c_{00}$ basis is an $\ell_1^\xi$-spreading model in $X_\xi$.  We recall that a sequence $(x_n)_{n=1}^\infty$ in a Banach space $X$ is called \emph{convexly unconditional} if for every $\delta>0$, there exists $C(\delta)>0$ such that for any $(w_n)_{n=1}^\infty \in c_{00}$ with $\sum_{n=1}^\infty |w_n|\leqslant 1$ and $\|\sum_{n=1}^\infty w_nx_n\|\geqslant \delta$, $\|\sum_{n=1}^\infty \lambda_n w_nx_n\|\geqslant C(\delta)$ for any $(\lambda_n)_{n=1}^\infty$ with $|\lambda_n|=1$ for all $n\in\nn$.  It was shown in \cite{AMerTs-Israel} that any seminormalized, weakly null sequence has a convexly unconditional subsequence. We note that for $0<\xi<\omega_1$, if $(x_n)_{n=1}^\infty$ is convexly unconditional and $$0<\inf\{\|x\|: E\in \mathcal{S}_\xi, x\in \text{co}(x_n: n\in E)\},$$ then $(x_n)_{n=1}^\infty$ is an $\ell_1^\xi$-spreading model.

For $0<\xi<\omega_1$, we say a sequence $(x_n)_{n=1}^\infty\in X$ is $\xi$-\emph{weakly convergent to} $x\in X$ if it is weakly convergent to $x$ and  no subsequence of $(x_n-x)_{n=1}^\infty$ is an $\ell_1^\xi$-spreading model.  We say $(x_n)_{n=1}^\infty$ is $0$-\emph{weakly convergent} (resp. $\omega_1$-\emph{weakly convergent}) to $x\in X$ if it is norm (resp. weakly) convergent to $x$.  Note that if $(x_n)_{n=1}^\infty$ is $\xi$-weakly convergent to $x$, it is also $\zeta$-weakly convergent to $x$ for all $\xi\leqslant \zeta\leqslant \omega_1$. We note that in \cite{AGodR-book}, $\xi$-\emph{convergent} was defined (see Definition III.2.7) using the repeated averages hierarchy. It follows from \cite[Proposition III.3.10]{AGodR-book} that the notion of $\xi$-convergent defined there coincides with what we have defined here as $\xi$-weakly convergent to $0$.

For $0\leqslant \xi\leqslant \omega_1$, we say a sequence is $\xi$-\emph{weakly convergent} if it is $\xi$-weakly convergent to some $x$. We say  sequence is $\xi$-\emph{weakly null} if it is $\xi$-weakly convergent to $0$.   

\begin{definition}
Given a subset $K$ of a Banach space $X$, we say $K$ is $\xi$-\emph{weakly precompact} (resp. $\xi$-\emph{weakly compact}) if every sequence  $(x_n)_{n=1}^\infty\subset K$ has a subsequence which is $\xi$-weakly convergent to some member of $X$ (resp .$K$).  Note that $0$-weak (pre)compactness  coincides with norm (pre)compactness,  and $\omega_1$-weak (pre)compactness coincides with weak (pre)compactness. 
\label{def}
\end{definition}

We isolate the following criterion to check that a given set is not $\xi$-weakly precompact. 

\begin{proposition} Fix $0<\xi<\omega_1$ and a subset $K$ of the Banach space $X$.  If there exist a sequence $(y_n)_{n=1}^\infty$ of $K$ and $y\in X$ such that $(y_n)_{n=1}^\infty$ is weakly convergent to $y$ and $$0<\inf \{\|y'-y\|: E\in \mathcal{S}_\xi, y'\in \text{\emph{co}}(y_n: n\in E)\},$$ then $K$ is not $\xi$-weakly precompact. 
\label{crit}
\end{proposition}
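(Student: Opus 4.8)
The plan is to show directly that the sequence $(y_n)_{n=1}^\infty\subset K$ admits no $\xi$-weakly convergent subsequence, which by Definition \ref{def} immediately gives that $K$ is not $\xi$-weakly precompact. First I would pass to the translated sequence $z_n:=y_n-y$. Since $y'\in\text{co}(y_n:n\in E)$ if and only if $y'-y\in\text{co}(z_n:n\in E)$, the hypothesis reads
$$c:=\inf\{\|z\|: E\in\mathcal{S}_\xi,\ z\in\text{co}(z_n:n\in E)\}>0,$$
and $(z_n)_{n=1}^\infty$ is weakly null. I would then record that $(z_n)$ is seminormalized: every singleton $\{n\}$ belongs to $\mathcal{S}_\xi$, so taking $E=\{n\}$ in the infimum gives $\|z_n\|\geqslant c>0$, while weak convergence forces norm boundedness from above.

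The key step is to observe that the convex-hull lower bound is inherited by every subsequence. Indeed, let $(z_{m_j})_{j=1}^\infty$ be any subsequence and suppose $z\in\text{co}(z_{m_j}:j\in E)$ for some $E\in\mathcal{S}_\xi$. Writing $F=\{m_j:j\in E\}$ and using $m_j\geqslant j$ together with the spreading property of $\mathcal{S}_\xi$, we get $F\in\mathcal{S}_\xi$, and clearly $z\in\text{co}(z_n:n\in F)$, whence $\|z\|\geqslant c$. Thus every subsequence of $(z_n)$ is again seminormalized, weakly null, and satisfies the convex-hull condition with the same constant $c$. Now I invoke the two facts recalled just before the statement: any seminormalized weakly null sequence has a convexly unconditional subsequence \cite{AMerTs-Israel}, and a convexly unconditional sequence satisfying the convex-hull condition is an $\ell_1^\xi$-spreading model. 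Applying these in turn, I conclude that every subsequence of $(z_n)$ has a further subsequence which is an $\ell_1^\xi$-spreading model.

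Finally, I would argue by contradiction: suppose some subsequence $(y_{m_j})$ of $(y_n)$ is $\xi$-weakly convergent, say to $x\in X$. Since $\xi$-weak convergence entails weak convergence and $(y_{m_j})$ converges weakly to $y$, uniqueness of weak limits gives $x=y$. By definition, no subsequence of $(y_{m_j}-y)=(z_{m_j})$ could then be an $\ell_1^\xi$-spreading model, contradicting the conclusion of the previous paragraph applied to $(z_{m_j})$. Hence no subsequence of $(y_n)$ is $\xi$-weakly convergent, and $K$ fails to be $\xi$-weakly precompact. I expect the main obstacle to be the key step, namely verifying that the convex-hull lower bound survives passage to subsequences; this is exactly where the spreading property of the Schreier family is essential, and the remainder is bookkeeping together with the two quoted results.
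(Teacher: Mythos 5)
Your proof is correct and follows essentially the same route as the paper's: translate by $y$, use the Argyros--Mercourakis--Tsarpalias result to pass to a convexly unconditional subsequence, upgrade the convex-hull lower bound to an $\ell_1^\xi$-spreading model, and observe that uniqueness of weak limits forces $y$ to be the only candidate $\xi$-weak limit, so no subsequence of $(y_n)_{n=1}^\infty$ can be $\xi$-weakly convergent. The only difference is organizational: you verify explicitly, via the spreading property of $\mathcal{S}_\xi$, that the convex-hull bound survives passage to subsequences, a point the paper uses implicitly when it passes to a subsequence and relabels.
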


\begin{proof} Since $(y_n-y)_{n=1}^\infty$ is seminormalized and weakly null, then by passing to a subsequence and relabeling, we may assume $(y_n-y)_{n=1}^\infty$ is convexly unconditional.  Then $$0<\inf \{\|y'-y\|: E\in \mathcal{S}_\xi, y'\in \text{co}(y_n: n\in E)\}$$ implies that $$0<\inf \{\|y'-y\|: E\in \mathcal{S}_\xi, y'=\sum_{i\in E} a_iy_i, \sum_{i\in E}|a_i|=1\} .$$   This implies that $(y_n-y)_{n=1}^\infty$ is an $\ell_1^\xi$-spreading model, as are all of its subsequences. This implies that no subsequence of $(y_n)_{n=1}^\infty$ is $\xi$-weakly convergent to any $x\in X$. This is because the only $x\in X$ to which a subsequence of $(y_n)_{n=1}^\infty$ could be $\xi$-weakly convergent is $x=y$. However, for any subsequence $(y'_n)_{n=1}^\infty$ of $(y_n)_{n=1}^\infty$, $(y'_n-y)_{n=1}^\infty$ is an $\ell_1^\xi$-spreading model, and so $(y'_n)_{n=1}^\infty$ is not $\xi$-weakly convergent to $y$.

\end{proof}

\begin{proposition} For a convex subset $K$ of the Banach space $X$, $K$ is $\xi$-weakly precompact if and only if $\overline{K}^\text{\emph{weak}}$ is $\xi$-weakly compact. 

\label{barbarian}
\end{proposition}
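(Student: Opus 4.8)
The plan is to prove the two implications separately; the reverse implication is immediate and uses neither convexity nor the restriction $0<\xi<\omega_1$, while the forward implication carries all the content and is the only place convexity enters. For the reverse implication, suppose $\overline{K}^{\text{weak}}$ is $\xi$-weakly compact. Given any sequence $(x_n)_{n=1}^\infty\subset K\subseteq \overline{K}^{\text{weak}}$, $\xi$-weak compactness of $\overline{K}^{\text{weak}}$ yields a subsequence that is $\xi$-weakly convergent to some member of $\overline{K}^{\text{weak}}\subseteq X$, which is exactly the requirement for $K$ to be $\xi$-weakly precompact.

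For the forward implication, assume $K$ is $\xi$-weakly precompact. Since $\xi$-weak convergence implies weak convergence, every sequence in $K$ has a weakly convergent subsequence, so by the Eberlein--\v{S}mulian theorem $K$ is relatively weakly compact and $\overline{K}^{\text{weak}}$ is weakly compact (in particular bounded). The decisive structural input is that $K$ is convex, so Mazur's theorem gives $\overline{K}^{\text{weak}}=\overline{K}$, the norm closure; this is what will let me replace points of the weak closure by norm-close elements of $K$.

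I would then argue by contradiction. If $\overline{K}^{\text{weak}}$ is not $\xi$-weakly compact, choose $(y_n)_{n=1}^\infty\subset \overline{K}^{\text{weak}}$ with no subsequence $\xi$-weakly convergent to a member of $\overline{K}^{\text{weak}}$. Weak compactness lets me pass to a subsequence with $y_n\to y$ weakly for some $y\in\overline{K}^{\text{weak}}$; this subsequence cannot be $\xi$-weakly convergent to $y$, so by definition a further subsequence of $(y_n-y)_{n=1}^\infty$ is an $\ell_1^\xi$-spreading model, say with constant $c>0$. After relabeling, $(y_n)_{n=1}^\infty\subset\overline{K}$, $y_n\to y$ weakly, and $(y_n-y)_{n=1}^\infty$ is an $\ell_1^\xi$-spreading model of constant $c$. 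Using $\overline{K}^{\text{weak}}=\overline{K}$, pick $x_n\in K$ with $\|x_n-y_n\|\to 0$ and $\|x_n-y_n\|\leqslant c/4$ for all $n$; then $x_n\to y$ weakly. For any $E\in\mathcal{S}_\xi$ and any convex combination $x'=\sum_{n\in E}a_nx_n$, the spreading-model lower bound and the triangle inequality give
$$\|x'-y\|\geqslant \Bigl\|\sum_{n\in E}a_n(y_n-y)\Bigr\|-\sum_{n\in E}a_n\|x_n-y_n\|\geqslant c-c/4>0,$$
so the convex-hull infimum appearing in Proposition \ref{crit} is positive. Proposition \ref{crit} then forces $K$ to fail to be $\xi$-weakly precompact, the desired contradiction.

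Finally I would treat the endpoints separately. For $\xi=0$ the assertion reduces, again via $\overline{K}^{\text{weak}}=\overline{K}$ for convex $K$, to the classical fact that a subset of a Banach space is relatively norm compact if and only if its norm closure is norm compact; for $\xi=\omega_1$ it is merely the definition of relative weak compactness combined with Eberlein--\v{S}mulian. I expect the main obstacle to be the stability of the $\ell_1^\xi$-spreading-model estimate under perturbation, which is precisely what the norm approximation $\overline{K}^{\text{weak}}=\overline{K}$ supplies: convexity is essential here, since without it one cannot replace the $y_n$ by nearby elements of $K$ while preserving the lower bound that Proposition \ref{crit} requires.
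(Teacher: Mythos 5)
Your proof is correct and follows essentially the same route as the paper's: the reverse direction is the same immediate argument, and the forward direction uses Mazur's theorem to identify $\overline{K}^{\text{weak}}$ with the norm closure, extracts a weakly convergent sequence whose differences form an $\ell_1^\xi$-spreading model, perturbs to nearby elements of $K$, and applies Proposition \ref{crit}. The only cosmetic differences are that you argue by contradiction rather than contraposition and treat the endpoint cases $\xi=0,\omega_1$ explicitly.
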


\begin{proof} If $\overline{K}^\text{weak}$ is $\xi$-weakly compact, then any sequence $(x_n)_{n=1}^\infty\subset K\subset \overline{K}^\text{weak}$ has a subsequence which is $\xi$-weakly convergent, and therefore weakly convergent,  to some $x\in \overline{K}^\text{weak}\subset X$. Therefore $K$ is $\xi$-weakly precompact. 

For the converse, note that since $K$ is convex, $\overline{K}^\text{weak}$ is just the norm closure $\overline{K}$ of $K$ by the Mazur lemma. Assume $\overline{K}$ is not $\xi$-weakly compact. We will prove that $K$ is not $\xi$-weakly precompact. 

If $\overline{K}$ is not weakly compact, then $K$ is not weakly precompact, and therefore not $\xi$-weakly precompact. 

Assume $\overline{K}$ is weakly compact. Since it is not $\xi$-weakly compact, there exists sequence $(x_n)_{n=1}^\infty\subset \overline{K}$ with no $\xi$-weakly convergent subsequence. Since $\overline{K}$ is weakly compact, we may pass to a subsequence and assume there exists $x\in \overline{K}$ such that $(x_n-x)_{n=1}^\infty$ is weakly null. Since $(x_n)_{n=1}^\infty$ has no $\xi$-weakly convergent subsequence, $(x_n-x)_{n=1}^\infty$ has no $\xi$-weakly null subsequence, and in particular it is weakly null and not $\xi$-weakly null. By the definition of $\xi$-weak nullity, $(x_n-x)_{n=1}^\infty$ has a subsequence which is an $\ell_1^\xi$-spreading model. We may pass to a subsequence and relabel and assume $(x_n-x)_{n=1}^\infty$ is an $\ell_1^\xi$-spreading model. This means there exists $\ee>0$ such that $$\ee \leqslant \inf \{\|y-x\|: E\in \mathcal{S}_\xi, y=\sum_{i\in E}a_ix_i, \sum_{i\in E}|a_i|=1\}.$$ We can select $(y_n)_{n=1}^\infty\subset K$ such that for all $n\in\nn$, $\|x_n-y_n\|<\min \{n^{-1}, \ee/2\}$. Then $(y_n-x)_{n=1}^\infty$ is also a weakly null, $\ell_1^\xi$-spreading model. Weak nullity follows from the fact that $\lim_n \|(y_n-x)-(x_n-x)\|=0$, while the fact that $(y_n-x)_{n=1}^\infty$ is an $\ell_1^\xi$-spreading model follows from the fact that for any $E\in\mathcal{S}_\xi$ and scalars $(a_i)_{i\in E}$ with $\sum_{i\in E}|a_i|=1$, $$\|\sum_{i\in E}a_i(y_i-x)\|\geqslant \|\sum_{i\in E}a_i(x_i-x)\|-\sum_{i\in E}|a_i|\|y_i-x_i\| \geqslant \ee-\ee/2=\ee/2.$$   An appeal to Proposition \ref{crit} yields that $K$ is not $\xi$-weakly precompact.

\end{proof}

Next we prove a result regarding convexity. In Section $4$, we discuss the sharpness of this result.

\begin{proposition} For $0<\xi<\omega_1$ and a Banach space $X$, if $(x_n)_{n=1}^\infty\subset X$ is $\xi$-weakly null, then $(x_n)_{n=1}^\infty$ admits a subsequence $(x_{k_n})_{n=1}^\infty$ such that the closed, convex hull $\overline{\text{\emph{co}}}(x_{k_n}:n\in\nn)$ is $\xi$-weakly compact.

\label{convex}
\end{proposition}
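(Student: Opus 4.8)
My first move is to reduce the statement to a precompactness question to which Propositions \ref{crit} and \ref{barbarian} apply. Since $(x_n)_{n=1}^\infty$ is $\xi$-weakly null it is in particular weakly null, so $\{0\}\cup\{x_n:n\in\nn\}$ is weakly compact; by the Krein--\v{S}mulian theorem $\overline{\text{co}}(x_n:n\in\nn)$, and hence $\overline{\text{co}}(x_{k_n}:n\in\nn)$ for every subsequence, is weakly compact. Because $\text{co}(x_{k_n}:n\in\nn)$ is convex, its weak closure equals its norm closure $\overline{\text{co}}(x_{k_n}:n\in\nn)$ by Mazur's lemma, so Proposition \ref{barbarian} reduces the problem to producing a subsequence for which $\text{co}(x_{k_n}:n\in\nn)$ is $\xi$-weakly precompact. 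If $(x_n)_{n=1}^\infty$ has a norm null subsequence I take that subsequence, whose closed convex hull is norm compact and therefore $\xi$-weakly compact; so I may assume $(x_n)_{n=1}^\infty$ is seminormalized and, passing to a subsequence, that it is basic (Bessaga--Pe\l czy\'nski) and convexly unconditional (the result from \cite{AMerTs-Israel} recalled above).

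The heart of the argument is the selection of the subsequence, and here I would use that $\xi$-weak nullity means no subsequence is an $\ell_1^\xi$-spreading model. I would upgrade this to a uniform, thresholded smallness statement: extract $(x_{k_n})_{n=1}^\infty$ together with $\ee_j\downarrow 0$ and integers $N_j$ so that every convex combination of the $x_{k_i}$ supported on a set $E\in\mathcal{S}_\xi$ with $\min E\geqslant N_j$ has norm less than $\ee_j$. Obtaining this is a Ramsey/Cantor--Bendixson type extraction from the failure of the $\ell_1^\xi$-spreading model condition; the point that makes it usable for convex hulls is that, via the repeated averages hierarchy of \cite{AGodR-book}, one must push the same control from the $x_{k_i}$ to convex block sequences of them, the idea being that an $\mathcal{S}_\xi$-admissible convex combination of blocks should be absorbed by a repeated $\xi$-average of the underlying sequence.

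With the subsequence fixed, I would verify $\xi$-weak precompactness of $\text{co}(x_{k_n})$ directly. Given $(z_m)_{m=1}^\infty\subset \text{co}(x_{k_n})$, weak compactness lets me pass to a subsequence converging weakly to some $z$. Using that $(x_{k_n})$ is basic, a gliding hump argument splits into two cases: either a subsequence of $(z_m)$ lies in a fixed finite-dimensional subspace, where it is norm precompact and hence $\xi$-weakly convergent, or, after a small perturbation, $(z_m-z)$ may be replaced by a seminormalized convex block sequence $(\tilde z_m)$ of $(x_{k_n})$ with weak limit $0$. In the second case $(\tilde z_m)$ is weakly null, and the transferred smallness estimate should show that no subsequence of $(\tilde z_m)$ is an $\ell_1^\xi$-spreading model; hence $(\tilde z_m)$ is $\xi$-weakly null, so $(z_m)$ has a $\xi$-weakly convergent subsequence. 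Undoing the perturbation and invoking the framework of Proposition \ref{crit} completes the verification.

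The step I expect to be the main obstacle is exactly the transfer of the uniform $\mathcal{S}_\xi$-smallness from the subsequence to arbitrary convex block sequences of it, equivalently the permanence of $\xi$-weak nullity under convex block sequences. A naive composition is dangerous, since an $\mathcal{S}_\xi$-admissible union of $\mathcal{S}_\xi$-supported blocks a priori escapes to $\mathcal{S}_{\xi\cdot 2}$ rather than remaining governed by $\mathcal{S}_\xi$; the whole difficulty is to arrange, through the thresholded extraction and a careful repeated-averages accounting, that such composite averages are nevertheless forced below the spreading-model constant. The gliding hump reduction and the perturbation bookkeeping are routine once a basic, convexly unconditional subsequence has been fixed, so essentially all of the real work is concentrated in this combinatorial control.
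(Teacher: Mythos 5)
Your reduction is sound and matches the paper's: by Proposition \ref{barbarian} (Mazur's lemma plus convexity of the hull), it suffices to extract a subsequence whose convex hull is $\xi$-weakly precompact. But the step that does all the work in your plan --- the extraction of $(x_{k_n})_{n=1}^\infty$ with a uniform, thresholded $\mathcal{S}_\xi$-smallness property, together with its transfer to convex block sequences --- is asserted, not proved, and you yourself flag it as ``the main obstacle.'' This is a genuine gap, not a routine detail, for two reasons. First, the thresholded statement you aim for (smallness of convex combinations supported on sets $E\in\mathcal{S}_\xi$) does not by itself control the convex hull: a generic element of $\text{co}(x_{k_n}:n\in\nn)$, and any convex block sequence of the $x_{k_n}$, has support that escapes $\mathcal{S}_\xi$ (a priori landing only in $\mathcal{S}_{\xi\cdot 2}$), which is exactly the failure you describe without resolving. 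Second, no soft Ramsey extraction from $\xi$-weak nullity alone can work: Theorem \ref{build} (after Lopez-Abad--Ruiz--Tradacete) exhibits, for each $k$, a normalized $1$-unconditional basis that is $(k+1)$-weakly null yet admits a convex block sequence which is an $\ell_1^{k+1}$-spreading model; so $\xi$-weak nullity is emphatically not inherited by convex blocks, and the subsequence must be produced by a genuinely quantitative argument (repeated averages in the sense of \cite{AMerTs-Israel}, \cite{AGodR-book}) that your sketch does not carry out.

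For comparison, the paper closes exactly this gap by citation rather than construction: it invokes the proofs of Corollaries 3.11 and 3.13 of \cite{CN}, which produce $k_1<k_2<\ldots$ such that the operator $\Phi:\ell_1\to X$ defined by $\Phi\sum_{n=1}^\infty a_ne_n=\sum_{n=1}^\infty a_nx_{k_n}$ lies in the ideal $\mathfrak{W}_\xi$. This says at once that $\Phi B_{\ell_1}$, i.e.\ the full absolutely convex hull of the subsequence, is $\xi$-weakly precompact --- no separate passage from $\mathcal{S}_\xi$-supported combinations to arbitrary convex combinations or convex blocks is needed --- and Proposition \ref{barbarian} then gives $\xi$-weak compactness of $\overline{\text{co}}(x_{k_n}:n\in\nn)$ as a weakly closed subset of $\overline{\Phi B_{\ell_1}}$. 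If you want your argument to be self-contained, the repeated-averages extraction is the theorem you must actually prove; as written, your proposal is a plan whose decisive step is missing.
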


\begin{proof} We recall from \cite{BCFrWa-JFA} the definition of the operator ideal $\mathfrak{W}_\xi$ such that $\mathfrak{W}_\xi(E,F)$ is the set of all operators $A:E\to F$ such that for any $(e_n)_{n=1}^\infty\subset B_E$, $(Ae_n)_{n=1}^\infty$ has a subsequence which is $\xi$-convergent in $F$. This is equivalent to saying that $AB_E$ is $\xi$-weakly precompact.   

 It is implicitly contained within the proofs of Corollaries $3.11$ and $3.13$ of \cite{CN} that if $(x_n)_{n=1}^\infty$ is $\xi$-weakly null, there exist $k_1<k_2<\ldots$ such that the operator $\Phi:\ell_1\to X$ given by $\Phi\sum_{n=1}^\infty a_ne_n=\sum_{n=1}^\infty a_n x_{k_n}$ lies in $\mathfrak{W}_\xi$.  By Proposition \ref{barbarian}, $\overline{\Phi B_{\ell_1}}$ is $\xi$-weakly compact, and so is the weakly closed subset $\overline{\text{co}}(x_{k_n}: n\in\nn)$ of $\overline{\Phi B_{\ell_1}}$.

\end{proof}

\begin{rem}\upshape In the preceding proposition, passing to a subsequence is necessary, at least in the cases $0<\xi<\omega$. That is, for such $\xi$, there exists a Banach space and a $\xi$-weakly null sequence whose closed, convex hull is not $\xi$-weakly compact. We discuss this in the next section. It is this obstacle which prevents one from applying the argument from \cite{DFLORT} for the $\xi=\omega_1$ case verbatim to the $0<\xi<\omega_1$ cases.

\end{rem}

\begin{proof}[Proof of Theorem \ref{main}]  For $0<\xi<\omega_1$, we prove by contraposition that item (iv) of Remark \ref{mer} implies item (i) of Remark \ref{mer}.  Assume that $X$ is not $\xi$-Schur and let $(x_n)_{n=1}^\infty$ be a normalized, $\xi$-weakly null sequence. Using Proposition \ref{convex}, after passing to a subsequence and relabeling, we may assume that $K_0=\overline{\text{co}}(x_n:n\in\nn)$ is $\xi$-weakly compact.  For each $n\in\nn$, let $K_n= nK_0 \cap \frac{1}{n}B_X$.  Then each $K_n$ is $\xi$-weakly compact, since it is a weakly closed subset of the $\xi$-weakly compact set $nK_0$. Since $$\{0\}=\bigcap_{n=1}^\infty \bigcup_{m=n}^\infty K_m,$$ it is easy to see that $$K=\bigcup_{n=1}^\infty K_n$$ is also $\xi$-weakly compact. From here, the proof of \cite[Theorem 1]{DFLORT} may be followed exactly. 

\end{proof}

\begin{rem}\upshape Let us recall that if $\textsf{\emph{V}}_\xi$ denotes the class of $\xi$-Schur Banach spaces, $\textsf{\emph{V}}_{\omega^\xi}$, $\xi\leqslant \omega_1$, are all distinct. Therefore the preceding theorem interpolates between Theorem \ref{Groth} and Theorem \ref{non} in a non-vacuous way.

\end{rem}

\section{Topology}

In this section, for a subset $H$ of $\mathcal{S}_\xi$, we let $MAX(H)$ denote the set of members of $H$ which are maximal with respect to inclusion.  

\begin{proposition} For $\xi<\omega_1$, there exists a partition $(H_\zeta)_{\zeta\leqslant \omega^\xi}$ of $\mathcal{S}_\xi$ such that $H_0=MAX(\mathcal{S}_\xi)$ and if $H_\zeta\ni E\prec F$, $F\in \cup_{\mu<\zeta}H_\mu$.

\label{easy}
\end{proposition}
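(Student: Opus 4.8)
The plan is to realize $(H_\zeta)_{\zeta\leqslant\omega^\xi}$ as the level sets of the ordinal rank function of the natural well-founded tree on $\mathcal{S}_\xi$, reading $E\prec F$ in the statement as the relation that $F$ properly end-extends $E$ (equivalently, $E$ is a proper initial segment of $F$). First I would order $\mathcal{S}_\xi$ by the initial-segment relation $\preceq$ and observe that compactness forces this tree to be well-founded: an infinite strictly $\preceq$-increasing chain would assemble an infinite set all of whose finite initial segments lie in $\mathcal{S}_\xi$, and the indicator of that set would be a limit in $\{0,1\}^\nn$ of elements of $\{1_E:E\in\mathcal{S}_\xi\}$ lying outside it, contradicting compactness. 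Hence I may define by well-founded recursion the rank $\rho(E)=\sup\{\rho(E\cup\{n\})+1:n>\max E,\ E\cup\{n\}\in\mathcal{S}_\xi\}$, with the supremum of the empty set being $0$, and then set $H_\zeta=\{E\in\mathcal{S}_\xi:\rho(E)=\zeta\}$.

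With this definition the monotonicity requirement is automatic. If $F$ properly end-extends $E$, say $F=E\cup\{n_1<\dots<n_j\}$ with each $n_i>\max E$, then $E\subset E\cup\{n_1\}\subset\dots\subset F$ is a chain of one-step end-extensions, all of whose members lie in $\mathcal{S}_\xi$ by heredity, so $\rho(F)<\rho(E)$ by the recursion. Thus if $E\in H_\zeta$ and $E\prec F$, then $F\in H_{\rho(F)}\subset\bigcup_{\mu<\zeta}H_\mu$, as required.

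Two points then need genuine verification. The first is the identity $H_0=MAX(\mathcal{S}_\xi)$. By construction $\rho(E)=0$ precisely when $E$ has no proper end-extension in $\mathcal{S}_\xi$, i.e. when $E$ is maximal for $\preceq$, so I must check that for a regular family $\preceq$-maximality coincides with the inclusion-maximality defining $MAX$. The implication from inclusion-maximal to $\preceq$-maximal is trivial; for the converse I would use spreading. If $E$ is properly contained in some $F\in\mathcal{S}_\xi$, choose $m\in F\setminus E$; heredity gives $E\cup\{m\}\in\mathcal{S}_\xi$, and spreading the associated increasing sequence to $E\cup\{N\}$ for any $N>\max E$ (which is pointwise larger) produces a proper end-extension of $E$, so $E$ is not $\preceq$-maximal. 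This is the one place where the structural hypotheses on $\mathcal{S}_\xi$ are used.

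The second, and the main obstacle, is that the rank range is exactly $[0,\omega^\xi]$; since $\varnothing$ is the $\preceq$-least node its rank dominates all others, so this reduces to the computation $\rho(\varnothing)=\omega^\xi$, i.e. the evaluation of the Cantor--Bendixson/tree index of the Schreier family. I would prove $\rho(\varnothing)=\omega^\xi$ by transfinite induction on $\xi$ directly from the recursive definition of $\mathcal{S}_\xi$. The base case $\mathcal{S}_0$ is immediate. For a limit $\xi$, the representation $\mathcal{S}_\xi=\{E:\exists n,\ n\leqslant E\in\mathcal{S}_{\xi_n}\}$ with $\xi_n\uparrow\xi$ gives $\rho(\varnothing)=\sup_n\omega^{\xi_n}=\omega^\xi$. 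For a successor, where $\mathcal{S}_{\xi+1}$ is assembled from admissible blocks $E_1<\dots<E_n$ in $\mathcal{S}_\xi$ with $n\leqslant\min E_1$, one extra factor of $\omega$ appears and the rank multiplies to $\omega^\xi\cdot\omega=\omega^{\xi+1}$. The technical heart is the bookkeeping in the successor step: tracking, via spreading, that the rank of a node depends only on a suitably shifted tail family, which converts the block decomposition into the ordinal product. Alternatively one may simply invoke the known value $\omega^\xi+1$ of the Cantor--Bendixson index of $\mathcal{S}_\xi$. Setting $H_\zeta=\rho^{-1}(\zeta)$ for $\zeta\leqslant\omega^\xi$ then gives the desired partition, where some $H_\zeta$ may be empty, which is harmless.
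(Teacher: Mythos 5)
Your proof is correct, and it is essentially the rank-function formulation of the stratification the paper builds by iterated removal of maximal elements: the paper sets $\mathcal{S}_\xi^0=\mathcal{S}_\xi$, $\mathcal{S}_\xi^{\zeta+1}=\mathcal{S}_\xi^\zeta\setminus MAX(\mathcal{S}_\xi^\zeta)$, intersections at limit stages, and takes $H_\zeta=MAX(\mathcal{S}_\xi^\zeta)$; your level sets $H_\zeta=\rho^{-1}(\zeta)$ are exactly the layers this derivation peels off (one checks $\mathcal{S}_\xi^\zeta=\{E:\rho(E)\geqslant \zeta\}$ by transfinite induction, using your spreading argument to pass between inclusion-maximality and $\preceq$-maximality). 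The genuine difference is how each proof discharges the only nontrivial quantitative fact, namely that the index is at most $\omega^\xi$: the paper cites \cite[Proposition 3.2]{Concerning} for the statement that $(H_\zeta)_{\zeta\leqslant \omega^\xi}$ is a partition of $\mathcal{S}_\xi$, whereas you must show $\rho(\varnothing)\leqslant \omega^\xi$ and offer either an induction on $\xi$ or an appeal to the known Cantor--Bendixson index $\omega^\xi+1$ of $\mathcal{S}_\xi$. That inductive sketch is the thinnest part of your proposal: the successor step (``the rank multiplies to $\omega^\xi\cdot\omega$'') is precisely the nontrivial bookkeeping, requiring monotonicity of the rank under spreading and the shifted-tail analysis you allude to, so as written it is a plan rather than a proof; it is, however, a correct and standard plan, and your fallback citation is legitimate, provided one supplies the bridge between isolated points of $\{1_E: E\in\mathcal{S}_\xi\}$ and inclusion-maximal sets --- which your heredity-plus-spreading argument for $H_0=MAX(\mathcal{S}_\xi)$ in fact does. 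Two further points in your favor: you correctly read the paper's $E\prec F$ as proper end-extension (the paper's stated definition of $\prec$ as ``concatenation'' is a typo, and the usage in this proposition and in Proposition \ref{conan} confirms your reading), and you correctly note that only the upper bound on $\rho(\varnothing)$ is needed, since empty classes $H_\zeta$ are harmless for the inductive definition of the functionals $g_E$ in Proposition \ref{conan}, which is the only place the partition is used.
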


\begin{proof} Define $$\mathcal{S}_\xi^0=\mathcal{S}_\xi,$$ $$\mathcal{S}_\xi^{\zeta+1}= \mathcal{S}_\xi^\zeta\setminus MAX(\mathcal{S}_\xi^\zeta),$$ and if $\zeta<\omega_1$ is a limit ordinal, $$\mathcal{S}_\xi^\zeta = \bigcap_{\mu<\zeta} \mathcal{S}_\xi^\mu.$$  Note that $(\mathcal{S}_\xi^\zeta)_{\zeta\leqslant \omega^\xi}$ are decreasing with $\zeta$.  For each $\zeta\leqslant \omega^\xi$, let $$H_\zeta= MAX(\mathcal{S}_\xi^\zeta)= \mathcal{S}_\xi^\zeta \setminus \mathcal{S}_\xi^{\zeta+1}.$$ It follows from \cite[Proposition 3.2]{Concerning} that $(H_\zeta)_{\zeta\leqslant \omega^\xi}$ is a partition of $\mathcal{S}_\xi$.  It follows from the definition that $H_0=MAX(\mathcal{S}_\xi)$.  If $E\in H_\zeta$, then $E$ is maximal in $\mathcal{S}_\xi^\zeta$ with respect to inclusion. Therefore if $E\prec F$, $F\notin \mathcal{S}_\xi^\zeta$. But since $$\bigcup_{\zeta\leqslant \mu\leqslant \omega^\xi} H_\mu \subset \bigcup_{\zeta\leqslant \mu\leqslant \omega^\xi}\mathcal{S}_\xi^\mu \subset \mathcal{S}_\xi^\zeta,$$ this implies that $F\in H_\mu$ for some $\mu<\zeta$.

\end{proof}

We next provide a strengthening of Proposition \ref{barbarian}.

\begin{proposition} $K$ is $\xi$-weakly precompact if and only if $\overline{K}^\text{\emph{weak}}$ is $\xi$-weakly compact. 

\label{conan}
\end{proposition}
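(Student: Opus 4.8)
The plan is to retain the architecture of the proof of Proposition \ref{barbarian}, the single genuinely new point being that $K$ is no longer convex, so Mazur's lemma is unavailable and $\overline{K}^{\text{weak}}$ need not be the norm closure of $K$. The implication ``$\overline{K}^{\text{weak}}$ is $\xi$-weakly compact $\Rightarrow$ $K$ is $\xi$-weakly precompact'' is immediate and identical to before: any sequence in $K\subset\overline{K}^{\text{weak}}$ has a subsequence $\xi$-weakly convergent to a member of $\overline{K}^{\text{weak}}\subset X$. For the converse I would argue by contraposition. Since $\xi$-weak convergence implies weak convergence, $\xi$-weak precompactness implies weak precompactness; hence if $\overline{K}^{\text{weak}}$ is not weakly compact then $K$ is not weakly precompact and we are finished. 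Assume then that $\overline{K}^{\text{weak}}$ is weakly compact but not $\xi$-weakly compact. Exactly as in Proposition \ref{barbarian} I extract a sequence $(z_n)_{n=1}^\infty\subset\overline{K}^{\text{weak}}$ and a point $z\in\overline{K}^{\text{weak}}$ with $(z_n-z)_{n=1}^\infty$ weakly null and, after relabeling, an $\ell_1^\xi$-spreading model: there is $\ee>0$ with $\ee\leqslant\|\sum_{i\in E}a_i(z_i-z)\|$ whenever $E\in\mathcal{S}_\xi$ and $\sum_{i\in E}a_iz_i$ is a convex combination. It then suffices to manufacture $(y_n)_{n=1}^\infty\subset K$, weakly convergent to some $w$, with $\inf\{\|y'-w\|:E\in\mathcal{S}_\xi,\ y'\in\text{co}(y_i:i\in E)\}>0$, for then Proposition \ref{crit} shows $K$ is not $\xi$-weakly precompact.

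The core of the argument is to replace each $z_n\in\overline{K}^{\text{weak}}$ by a point $y_n\in K$ while preserving the spreading-model estimate. In the convex case this was carried out in norm; here only weak approximation is at hand. To set it up I would let $Z=\overline{\text{span}}(z_n-z:n\in\nn)$, a separable subspace containing every relevant convex combination, fix a countable set $(g_j)_{j=1}^\infty\subset B_{X^*}$ whose restrictions norm $Z$ (closed under sign), and then choose $y_n\in K$ (possible since $z_n\in\overline{K}^{\text{weak}}$) with $|g_j(y_n-z_n)|<1/n$ for all $j\leqslant n$. Passing to a further subsequence I may assume $(y_n)_{n=1}^\infty$ is weakly convergent to some $w$, necessarily with $g_j(w)=g_j(z)$ for every $j$. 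A direct computation then gives, for $E\in\mathcal{S}_\xi$ and a convex combination $y'=\sum_{i\in E}a_iy_i$, the identity $g_j(y'-w)=g_j\bigl(\sum_{i\in E}a_i(z_i-z)\bigr)+\sum_{i\in E}a_ig_j(y_i-z_i)$, so that if the combination $\sum_{i\in E}a_i(z_i-z)$ is $(\ee/2)$-normed by some $g_j$ with $j\leqslant\min E$, then for $i\in E$ one has $j\leqslant\min E\leqslant i$, whence $|g_j(y_i-z_i)|<1/i\leqslant 1/\min E$ and therefore $\|y'-w\|\geqslant g_j(y'-w)\geqslant\ee/2-1/\min E$, a quantity bounded below once $\min E$ is large.

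The step I expect to be the main obstacle is precisely the proviso in the last sentence: a priori the functional $g_j$ that $(\ee/2)$-norms a given convex combination may have index far larger than $\min E$, and then the error term $\sum_{i\in E}a_ig_j(y_i-z_i)$ is uncontrolled. Overcoming this is where Proposition \ref{easy} should enter. The rank partition $(H_\zeta)_{\zeta\leqslant\omega^\xi}$ of $\mathcal{S}_\xi$, in which every proper extension of a set strictly lowers the rank and the maximal sets have rank $0$, provides a well-founded scheme along which to pass to a subsequence of $(z_n)_{n=1}^\infty$ and to enlarge, level by level, the finite block of functionals controlled at each index, so that every combination indexed by a set $E$ of the final subsequence is $(\ee/2)$-normed by a functional of index at most $\min E$. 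Making this bookkeeping precise---matching the growth of the controlled blocks of functionals to the rank of the sets in which each index may participate, and then restricting to a tail so that $1/\min E<\ee/4$---is the crux of the proof. Once it is in place, the resulting sequence $(y_n)_{n=1}^\infty\subset K$ satisfies $\inf\{\|y'-w\|:E\in\mathcal{S}_\xi,\ y'\in\text{co}(y_i:i\in E)\}\geqslant\ee/4>0$, and an appeal to Proposition \ref{crit} finishes the argument exactly as in the convex case.
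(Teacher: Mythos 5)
Your easy direction, and the reduction of the converse to the case where $\overline{K}^{\text{weak}}$ is weakly compact but not $\xi$-weakly compact, are correct and agree with the paper, and you have correctly located the difficulty: points of $\overline{K}^{\text{weak}}$ can only be approximated by points of $K$ against finitely many functionals at a time. But the step you defer as ``bookkeeping'' is not bookkeeping; the goal you assign to it is unattainable. Write $u_i=z_i-z$. If a functional $g$ is to $(\ee/2)$-norm \emph{every} convex combination over $E$, then, taking the degenerate combination $y'=y_i$, it must in particular satisfy $\text{Re\ }g(u_i)\geqslant \ee/2$ for \emph{each} $i\in E$. Now fix the first index $n_1$ of your final subsequence and let $G$ be the (necessarily finite) set of functionals controlled by stage $n_1$; consider the sets $E=\{n_1,m\}\in\mathcal{S}_\xi$ as $m$ runs through the later indices of the subsequence. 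Each such $E$ would need some $g\in G$ with $\text{Re\ }g(u_m)\geqslant\ee/2$, yet $G$ is finite and $(u_m)_{m=1}^\infty$ is weakly null, so $\max_{g\in G}\text{Re\ }g(u_m)\to 0$. No passage to further subsequences, enlargement of blocks, or appeal to the rank partition can repair this: any functional norming all combinations over $E$ must nearly norm each individual $u_i$, $i\in E$, and weak nullity forbids functionals fixed at stage $\min E$ from doing so for the infinitely many admissible sets with that minimum.

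The paper escapes this by abandoning the requirement that the norming functional for combinations over $E$ be available by stage $\min E$. By the geometric Hahn-Banach theorem it chooses, for each maximal $E\in MAX(\mathcal{S}_\xi)$, a single $f_E\in B_{X^*}$ with $\text{Re\ }f_E(u_n)\geqslant\ee$ for all $n\in E$ (so $f_E$ norms all of $\text{co}(u_n:n\in E)$ at once, but depends on all of $E$), and then uses a free ultrafilter $\mathcal{U}$ together with the rank partition of Proposition \ref{easy} to define, by transfinite induction down the ranks, $g_E=\lim_{n\in\mathcal{U}}g_{E\smallfrown(n)}$ in the weak$^*$ topology for every non-maximal $E\in\mathcal{S}_\xi$. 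The subsequence $(m_n)_{n=1}^\infty$ and the approximants $v_n\in K-x$ are then constructed recursively, interleaved with this family: at stage $n$ only the finitely many $g_E$ with $E\subset\{1,\ldots,m_{n-1}\}$ or $\max E=m_n$ must be controlled, and the ultrafilter limits guarantee that the differences $g_{E\smallfrown(l)}-g_E$ are small on the finitely many previously chosen vectors for most $l$. The decisive estimate $\text{Re\ }g_E(v_{p_j})>\ee_0$ is obtained by telescoping $g_E=g_\varnothing+\sum_{i}(g_{E_i}-g_{E_{i-1}})$ along the initial segments $E_i$ of $E$, not by any bound on the index of the norming functional. So your outer frame (weak approximation into $K$, a weak limit $w$ agreeing with $z$ on the relevant functionals, then Proposition \ref{crit}) matches the paper, but at the crux you need the compatible tree-indexed family $(g_E)_{E\in\mathcal{S}_\xi}$ produced by ultrafilter limits, and this is where Proposition \ref{easy} actually enters; a countable norming family fixed in advance cannot work.
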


\begin{proof} As in the proof of Proposition \ref{barbarian}, if $\overline{K}^\text{weak}$ is $\xi$-weakly compact, then $K$ is $\xi$-weakly precompact, and if $\overline{K}^\text{weak}$ is not weakly compact, then $K$ is not $\xi$-weakly precompact. Convexity was not used in either of those arguments. What remains to show is that if $\overline{K}^\text{weak}$ is weakly compact but not $\xi$-weakly compact, then $K$ is not $\xi$-weakly precompact. As in the proof of Proposition \ref{barbarian}, we may assume there exist $(x_n)_{n=1}^\infty\subset \overline{K}^\text{weak}$, $x\in \overline{K}^\text{weak}$, and $\ee>0$ such that $(x_n-x)_{n=1}^\infty$ is weakly null and  $$\ee< \inf \{\|y-x\|: E\in \mathcal{S}_\xi, y=\sum_{i\in E}a_ix_i, \sum_{i\in E}|a_i|=1.\}.$$   

Let $u_n=x_n-x$. Fix a free ultrafilter $\mathcal{U}$ on $\nn$.  By the geometric Hahn-Banach theorem, for every $E\in MAX(\mathcal{S}_\xi)$, we may find $f_E\in B_{X^*}$ such that for every $n\in E$, $\text{Re\ }f_E(u_n)\geqslant \ee$.  Let $(H_\zeta)_{\zeta\leqslant \omega^\xi}$ be the partition from Proposition \ref{easy}.  We now define $g_E\in B_{X^*}$ for all $E\in \mathcal{S}_\xi$. We define $g_E$ for $E\in H_\zeta$ by induction on $\zeta$.  For $E\in H_0=MAX(\mathcal{S}_\xi)$, let $g_E=f_E$.   Now suppose $E\in H_\zeta$ for some $\zeta>0$ and if $g_F$ has been defined for all $F\in \cup_{\mu<\zeta}H_\mu$.  Since $E$ is non-maximal in $\mathcal{S}_\xi$, by standard properties of $\mathcal{S}_\xi$, $E\smallfrown(n)\in \mathcal{S}_\xi$ for all $n>\max E$. By the properties of $(H_\mu)_{\mu\leqslant \omega^\xi}$, for all $n>\max E$, $E\smallfrown (n)\in \cup_{\mu<\zeta} H_\mu$.  We now define $$g_E=\underset{n\in\mathcal{U}} \lim g_{E\smallfrown(n)},$$ where the limit is taken in the weak$^*$-topology.    This completes the definition of $g_E$ for all $E\in \mathcal{S}_\xi$.  We observe the following facts, easily shown for $E\in H_\zeta$ by induction on $\zeta$.    For any $E\in \mathcal{S}_\xi$, $$g_E\in \overline{\{f_F: F\in MAX(\mathcal{S}_\xi): E\preceq F\}}^{\text{weak}^*}\subset B_{X^*}.$$  In particular, if $n\in E$, then $n\in F$ for any $E\preceq F\in MAX(\mathcal{S}_\xi)$, so $\text{Re\ }g_E(u_n)\geqslant \ee$.

Fix a decreasing sequence of positive numbers $(\ee_n)_{n=0}^\infty\subset (0,1)$ such that $\ee-3\sum_{j=0}^\infty j\ee_j >\ee_0$. Let us recursively define $T_n\subset \mathcal{S}_\xi$, $M_1\supset M_2\supset \ldots$, $M_n\in \mathcal{U}$,  $m_1<m_2<\ldots$, $m_n\in M_n$,  and $v_n\in K-x$ recursively.   We perform the $n=1$ and $n=p+1>1$ cases simultaneously. In the $n=1$ case, let $p=0$.  If $n=p+1>1$, assume all previous choices have already been made.   Define $$T_n=\{E\in \mathcal{S}_\xi: E\subset \{1, \ldots, m_p\}\},$$ where $\{1, \ldots, m_0\}=\varnothing$ in the $p=0$, $n=1$ case.  Since for every $E\in T_n$ and every $1\leqslant j\leqslant p$, $$0=\underset{l\in \mathcal{U}}{\lim} (g_{E\smallfrown(l)}-g_E)(v_{m_j})=\underset{l\in\mathcal{U}}{\lim} g_E(u_l), $$ we may select $M_{n+1}\in\mathcal{U}$ such that $$|(g_{E\smallfrown (l)}-g_E)(v_{m_j})|<\ee_{|E|+1}$$ and $$|g_E(u_l)|<\ee_n$$ for all $E\in T_n$ and $1\leqslant j\leqslant p$.    By replacing $M_{n+1}$ with $M_{n+1}\cap M_n\in \mathcal{U}$, we may assume $M_{n+1}\subset M_n$.  We choose $m_n\in M_n$ arbitrary if $n=1$, and if $n>1$,  choose $m_n\in M_n$ such that $m_n>m_p$.     Since $u_{m_n}\in \overline{K}^\text{weak}-x$, $\text{Re\ }(u_{m_n})\geqslant \ee$ for all $E\in \mathcal{S}_\xi$ with $\max E=m_n$, and $|g_E(u_{m_n})|<\ee_n$ for all $E\in T_n$, we may choose $v_n\in K-x$ such that $\text{Re\ }g_E(v_n)\geqslant \ee-\ee_0$ for all $E\in \mathcal{S}_\xi$ with $\max E=m_n$ and such that  $|g_E(v_n)|<\ee_n$ and $|g_E(v_n-u_{m_n})|<\ee_n$ for all $E\in T_n$.  This completes the construction. Let $M=(m_n)_{n=1}^\infty$. Let us collect the important features of this construction. \begin{enumerate}[(i)]\item If $m_l\in E\in \mathcal{S}_\xi \cap [M]^{<\omega}$ and $m_l<\max E$, $|(g_E-g_{E\setminus \{\max E\}})(v_l)|<\ee_{|E|}$.  \item If $E\in \mathcal{S}_\xi \cap [M]^{<\omega}$, $m_l=\max E$, and $l<n$, then $|g_E(v_n)|<\ee_n$,  \item If $E\in \mathcal{S}_\xi\cap [M]^{<\omega}$ and $m_n=\max E$, $|g_E(v_n)|\geqslant \ee-\ee_0$.  \end{enumerate}

Now fix $\varnothing\neq E=\{m_{p_1}, \ldots, m_{p_t}\}\in \mathcal{S}_\xi\cap [M]^{<\omega}$ with $p_1<\ldots <p_t$.   Let  $E_0=\varnothing$ and $E_i=\{m_{p_1}, \ldots, m_{p_i}\}$.    We may write $f_E=g_\varnothing+\sum_{i=1}^t (g_{E_i}-g_{E_{i-1}})$. Note also that for $1\leqslant i\leqslant t$, $E_i\setminus \{\max E_i\}=E_{i-1}$.     For $1\leqslant j\leqslant t$, \begin{align*} \text{Re\ }f_E(v_{p_j}) & \geqslant  \text{Re\ }(g_{E_j}-g_{E_{j-1}})(v_{p_j}) - |g_\varnothing(v_{p_j})| - \sum_{i=1}^{j-1} |(g_{E_i}-g_{E_{i-1}})(v_{p_j})| - \sum_{i=j+1}^t |(g_{E_i}-g_{E_{i-1}})(v_{p_j})| \\ &  \geqslant \text{Re\ }g_{E_j}(v_{p_j}) - 2\sum_{i=0}^{j-1} |g_{E_i}(v_{p_j})| - \sum_{i=j+1}^t |(g_{E_i}-g_{E_{i-1}})(v_{p_j})| \\ & \geqslant \ee - \ee_0 -2j\ee_{p_j} - \sum_{i=j+1}^\infty \ee_j >\ee_0.\end{align*}   

Now since $v_n\in K-x$, $y_n=v_n+x\in K$. Since $K$ is weakly compact, we may select $p_1<p_2<\ldots$ and $y\in X$ such that $(y_{p_n})_{n=1}^\infty$ is weakly convergent to $y$ and let $P=\{m_{p_1}, m_{p_2}, \ldots\}$.  We now claim that $g_E(y)=g_E(x)$ for all $E\in \mathcal{S}_\xi\cap [P]^{<\omega}$.   To see this, fix such an $E=\{m_{p_1}, \ldots, m_{p_l}\}$ and note that, since $(y_{p_n})_{n=1}^\infty$ is weakly convergent to $y$ and $(x_{m_{p_n}})_{n=1}^\infty$ is weakly convergent to $x$,  \begin{align*} | g_E(y-x)| & = \underset{n}{\lim\sup}|g_E(y_{p_n}-x_{m_{p_n}})| = \underset{n}{\lim\sup} |g_E(v_{p_n}-u_{m_{p_n}})| \leqslant \underset{n}{\lim\sup} \ee_{p_n}=0. \end{align*}  

Now fix $E\in \mathcal{S}_\xi$ and $y'=\sum_{i\in E} a_i y_{p_i}\in \text{co}(y_{p_i}: i\in E)$. Note that $y'-x=\sum_{i\in E}a_iv_{p_i}$ and $F=(p_i)_{i\in E}\in \mathcal{S}_\xi$, from which it follows that \begin{align*} \|y'-y\| & \geqslant \text{Re\ }g_F(y'-y) = \text{Re\ }g_F(y'-x) = \sum_{i\in E}a_i \text{Re\ }g_F(v_{p_i}) \geqslant \ee_0\sum_{i\in E}a_i=\ee_0. \end{align*} Since $(v_{p_i})_{i=1}^\infty\subset K$, an appeal to Proposition \ref{crit} yields that $K$ is not $\xi$-weakly precompact. 

\end{proof}

We now introduce for $0<\xi<\omega_1$ the $\tau_\xi$ topology of a  Banach space $X$ and deduce that the $\xi$-weakly compact (resp. $\xi$-weakly precompact) subsets of $X$ are precisely the compact (resp. relatively compact) sets in the $\tau_\xi$ topology.  Most of the proofs in the remainder of this section follow from standard techniques, so we omit them. 

Let us say a subset $C$ of the Banach space is $\xi$-\emph{closed} if whenever $(x_n)_{n=1}^\infty\subset C$ is $\xi$-weakly convergent to $x\in X$, then $x\in C$. We say $U\subset X$ is $\xi$-\emph{open} if its complement is $\xi$-closed. We let $\tau_\xi$ denote the set of $\xi$-open subsets of $X$. 

\begin{lemma} \begin{enumerate}[(i)]\item $\tau_\xi$ is a topology on $X$. \item The $\tau_\xi$ topology is finer than the weak topology and coarser than the norm topology. In particular, it is a Hausdorff topology on $X$. \item If $(x_n)_{n=1}^\infty\subset X$ has no $\xi$-weakly convergent subsequence, then $T=\{x_n:n\in\nn\}$ is $\xi$-closed. \item A sequence $(x_n)_{n=1}^\infty$ is $\xi$-weakly convergent to $x\in X$ if and only if it is convergent to $x$ in the $\tau_\xi$ topology.  \end{enumerate}

\label{topology2}
\end{lemma}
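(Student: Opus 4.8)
The plan is to prove Lemma \ref{topology2} by verifying the four items in order, leaning heavily on Proposition \ref{conan} for the deeper assertions and on elementary point-set topology for the rest. The one genuine subtlety is that the $\xi$-weak convergence notion is defined via subsequences and $\ell_1^\xi$-spreading models, so the naive closure operator ``add all $\xi$-weak limits of convergent sequences'' need not be idempotent in general; I would therefore be careful to state $\tau_\xi$ exactly as given, namely as the collection of complements of $\xi$-closed sets, and to check the topology axioms directly from that definition rather than from a closure operator.

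For item (i), I would verify that $\varnothing$ and $X$ are $\xi$-closed (trivially), that a finite union of $\xi$-closed sets is $\xi$-closed, and that an arbitrary intersection of $\xi$-closed sets is $\xi$-closed. The intersection case is immediate: if $(x_n)$ lies in $\bigcap_\alpha C_\alpha$ and is $\xi$-weakly convergent to $x$, then $x\in C_\alpha$ for each $\alpha$. For the finite-union case, suppose $(x_n)\subset C\cup D$ is $\xi$-weakly convergent to $x$; then $(x_n)$ has a subsequence lying entirely in $C$ or entirely in $D$, and any subsequence of a $\xi$-weakly convergent sequence is $\xi$-weakly convergent to the same limit (this follows directly from the definition, since passing to a further subsequence cannot create an $\ell_1^\xi$-spreading model where none existed), so $x\in C$ or $x\in D$. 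Dualizing gives the topology axioms for the open sets. For item (ii), I would observe that every weakly closed set is $\xi$-closed, because $\xi$-weak convergence implies weak convergence by definition, so $\tau_\xi$ refines the weak topology; and every $\xi$-closed set is norm closed, because norm convergence is $0$-weak convergence and hence $\xi$-weak convergence for all $\xi$ (using the monotonicity remark that $\xi$-weak convergence implies $\zeta$-weak convergence for $\zeta\geqslant\xi$), so $\tau_\xi$ is coarser than the norm topology. Hausdorffness is then inherited from the weak topology.

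For item (iii), suppose $(x_n)$ has no $\xi$-weakly convergent subsequence and let $(y_k)\subset T=\{x_n:n\in\nn\}$ be $\xi$-weakly convergent to some $y$. If $(y_k)$ takes some value $x_{n_0}$ infinitely often, then $y=x_{n_0}\in T$ and we are done. Otherwise $(y_k)$ has a subsequence that is itself a subsequence of $(x_n)$, which would then be $\xi$-weakly convergent, contradicting the hypothesis. For item (iv), the forward implication is the content of the definition together with the observation that the $\xi$-closed sets are exactly the sequentially-$\xi$-weakly-convergence-closed sets, so if $(x_n)$ is $\xi$-weakly convergent to $x$ then every $\xi$-closed set containing a tail of $(x_n)$ must contain $x$, giving $\tau_\xi$-convergence. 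The reverse implication is where Proposition \ref{conan} enters: if $(x_n)$ is $\tau_\xi$-convergent to $x$ but not $\xi$-weakly convergent to $x$, then by item (iii) applied to an appropriate subsequence one manufactures a $\xi$-closed set separating $x$ from a tail of the sequence, contradicting $\tau_\xi$-convergence.

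The main obstacle I anticipate is item (iv), specifically the reverse implication, because $\tau_\xi$-convergence is an a priori weaker, net-style notion than the sequential $\xi$-weak convergence, and one must rule out the possibility of a sequence converging in $\tau_\xi$ without converging $\xi$-weakly. The key tool is item (iii): given a $\tau_\xi$-convergent sequence that is not $\xi$-weakly convergent to its purported limit $x$, extract a subsequence with no subsequence $\xi$-weakly convergent to $x$; by item (iii) the range of this subsequence (or its union with suitable tails) is $\xi$-closed and omits $x$, so its complement is a $\tau_\xi$-open neighborhood of $x$ containing no tail of the sequence, contradicting $\tau_\xi$-convergence. Since the excerpt explicitly notes that these proofs follow from standard techniques and are mostly omitted, I would present items (i)--(iii) briskly and devote the bulk of the argument to making item (iv) precise.
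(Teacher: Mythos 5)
Your items (i)--(iii) and your overall strategy for (iv) -- use item (iii) to manufacture a $\xi$-closed set whose complement is a $\tau_\xi$-open neighborhood of $x$ missing infinitely many terms of the sequence -- are essentially the paper's proof. However, the reverse direction of (iv) as you have written it contains a genuine gap in the application of item (iii). That item requires that the subsequence $(y_n)_{n=1}^\infty$ have \emph{no $\xi$-weakly convergent subsequence at all}, i.e., convergent to no point of $X$ whatsoever; what you arrange is only that no subsequence is $\xi$-weakly convergent \emph{to $x$}. These are not the same, and without a bridging argument the conclusion ``the range of this subsequence is $\xi$-closed'' can simply fail: if $y_n=z+u_n$ with $\|u_n\|\to 0$, $u_n\neq 0$, and $z\neq x$, then no subsequence of $(y_n)_{n=1}^\infty$ is $\xi$-weakly convergent to $x$ (all subsequences converge weakly to $z$), yet $T=\{y_n:n\in\nn\}$ is not $\xi$-closed, since it omits $z$.

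The missing ingredient is exactly the case analysis the paper performs. In your contradiction framing it amounts to an appeal to item (ii): since $\tau_\xi$ is finer than the weak topology, the assumed $\tau_\xi$-convergence of $(x_n)_{n=1}^\infty$ to $x$ forces weak convergence to $x$; hence every subsequence converges weakly to $x$, and since $\xi$-weak convergence implies weak convergence and weak limits are unique, $x$ is the \emph{only} point to which any subsequence could be $\xi$-weakly convergent. Only after this observation does ``no subsequence $\xi$-weakly convergent to $x$'' upgrade to ``no $\xi$-weakly convergent subsequence,'' which is what item (iii) needs; one also needs the spreading-model lower bound on singletons to see that $x\notin T$. (The paper instead proves the contrapositive directly, splitting into the case where $(x_n)_{n=1}^\infty$ is not weakly convergent to $x$ -- killed by a weakly open set, which is automatically $\tau_\xi$-open -- and the case where $(x_n-x)_{n=1}^\infty$ is weakly null but not $\xi$-weakly null, where the uniqueness argument above applies.) Two smaller points: your forward direction should be phrased in terms of $\xi$-closed sets containing a \emph{subsequence} of $(x_n)_{n=1}^\infty$ rather than ``a tail,'' since failure of $\tau_\xi$-convergence only yields a subsequence outside a neighborhood, not a tail; and your announced reliance on Proposition \ref{conan} is spurious -- neither your actual mechanism nor the paper's proof uses it anywhere.
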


\begin{proof} $(i)$ It is clear that $\varnothing,X$ are $\xi$-closed, as is an arbitrary intersection of $\xi$-closed sets.  Suppose $K_1, K_2$ are $\xi$-closed. We will show that $K_1\cup K_2$ is $\xi$-closed. To that end, suppose that $(x_n)_{n=1}^\infty\subset K_1\cup K_2$ is $\xi$-convergent to $x$. By passing to a subsequence, which is necessarily also $\xi$-convergent to $x$, we may assume there exists $j\in \{1,2\}$ such that $(x_n)_{n=1}^\infty\subset K_j$. By $\xi$-closedness of $K_j$, $x\in K_j\subset K_1\cup K_2$. 

$(ii)$ We must show that if $K\subset X$ is weakly closed, it is $\xi$-closed, and if it is $\xi$-closed, it is norm closed. Assume $K$ is $\xi$-closed and $(x_n)_{n=1}^\infty\subset K$ is $\xi$-convergent to $x\in X$. Then $x\in \overline{K}^\text{weak}=K$, and $K$ is $\xi$-closed.  Now suppose that $K$ is $\xi$-closed and $(x_n)_{n=1}^\infty\subset K$ is norm convergent to $x\in K$. Then $(x_n)_{n=1}^\infty$ is $\xi$-convergent to $x$. Since $K$ is $\xi$-closed, $x\in K$, and $K$ is norm closed. That $\tau_\xi$ is Hausdorff follows from the fact that the weak topology on $X$ is Hausdorff and $\tau_\xi$ is finer than the weak topology. 

$(iii)$ If $(y_n)_{n=1}^\infty\subset T$ is any sequence in $T$, it must either have a constant subsequence or a subsequence which is also a subsequence of $(x_n)_{n=1}^\infty$. Therefore if $(y_n)_{n=1}^\infty\subset T$ is $\xi$-convergent to some $x\in X$, then it must have a constant subsequence, and therefore $x$ is a member of the sequence $(x_n)_{n=1}^\infty$ and a member of $T$. Here we have used that limits of $\tau_\xi$-convergent sequences are unique because $\tau_\xi$ is a Hausdorff topology.

$(iv)$ Seeking a contradiction, assume that $(x_n)_{n=1}^\infty$ is $\xi$-convergent to $x$ but not convergent to $x$ in the $\tau_\xi$ topology. Then there exist a $\tau_\xi$-open set $U$ containing $x$ and subsequence $(y_n)_{n=1}^\infty$ of $(x_n)_{n=1}^\infty$ such that $(y_n)_{n=1}^\infty \subset X\setminus U$.  But since any subsequence of a $\xi$-convergent sequence is $\xi$-convergent to the same limit, $(y_n)_{n=1}^\infty\subset X\setminus U$ is $\xi$-convergent to $x\in U$. But this is impossible, since $X\setminus U$ is $\tau_\xi$-closed. 

Now suppose that $(x_n)_{n=1}^\infty$ is not $\xi$-convergent to $x$. Then either $(x_n)_{n=1}^\infty$ is not weakly convergent to $x$, or $(x_n-x)_{n=1}^\infty$ is weakly null but not $\xi$-weakly null. In the first case, there exist a weakly open $U$ containing $x$ and a subsequence $(y_n)_{n=1}^\infty$ of $(x_n)_{n=1}^\infty$ such that $(y_n)_{n=1}^\infty \subset X\setminus U$. Since $U$ is weakly open, it is also $\tau_\xi$-open, and $X\setminus U$ is $\tau_\xi$ closed.   Since $(y_n)_{n=1}^\infty\subset X\setminus U$, neither $(y_n)_{n=1}^\infty$ nor $(x_n)_{n=1}^\infty$ can be convergent to $x$ in the $\tau_\xi$ topology. In the second case, there exists a subsequence $(y_n)_{n=1}^\infty$ of $(x_n)_{n=1}^\infty$ such that $(y_n-x)_{n=1}^\infty$ is a weakly null $\ell_1^\xi$-spreading model, as are all of its subsqeuences. Then the sequence $(y_n)_{n=1}^\infty$ has no $\xi$-convergent subsequence, since $(y_n-x)_{n=1}^\infty$ has no $\xi$-weakly null subsequence. In this case $T=\{y_n: n\in\nn\}$ is $\tau_\xi$-closed. Then $U=X\setminus T$ is $\tau_\xi$-open and contains the sequence $(y_n)_{n=1}^\infty$. From this it follows that the original sequence $(x_n)_{n=1}^\infty$ is not $\tau_\xi$-converent to $x$.

\end{proof}

The following result includes a standard fact from topology, together with a minor variation thereof. Since the variation is not standard, we include the proof.

\begin{lemma} Let $K$ be a topological space which is either compact or sequentially compact. Let $Y$ be a Hausdorff topological space such that a subset of $Y$ is compact if and only if it is sequentially compact. Suppose $f:K\to Y$ is a continuous injection. Then $f$ is a homeomorphism of $K$ with $f(K)$.  In either case, $K$ is both sequentially compact and compact. 

\label{magic}
\end{lemma}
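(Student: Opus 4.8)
The plan is to prove that $f$ is a closed map onto $f(K)$; since $f$ is already a continuous injection, this immediately yields that $f$ is a homeomorphism of $K$ with $f(K)$. The two hypotheses on $K$ (compact or sequentially compact) will be handled in parallel, each via the appropriate version of the classical fact that a continuous bijection from a compact space onto a Hausdorff space is a homeomorphism.

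First I would take an arbitrary closed set $C\subset K$ and argue that $f(C)$ is compact in $Y$. If $K$ is compact, then $C$, being closed in $K$, is compact, and so $f(C)$ is compact as the continuous image of a compact set. If instead $K$ is sequentially compact, then $C$ is sequentially compact (any sequence in $C$ has, by sequential compactness of $K$, a subsequence converging to a point of $K$, and that point lies in $\overline{C}=C$), whence $f(C)$ is sequentially compact as the continuous image of a sequentially compact set; here I invoke the hypothesis on $Y$ to upgrade sequential compactness of $f(C)$ to compactness. In either case $f(C)$ is a compact subset of the Hausdorff space $Y$, hence closed in $Y$ and a fortiori closed in $f(K)$. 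Thus $f$ carries closed sets to closed sets, so $f^{-1}:f(K)\to K$ is continuous and $f$ is the desired homeomorphism.

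It then remains to show that $K$ is simultaneously compact and sequentially compact. Taking $C=K$ in the previous step shows that $f(K)$ is compact in $Y$, and the hypothesis on $Y$ makes $f(K)$ sequentially compact as well. Since $f$ is a homeomorphism onto $f(K)$ and both compactness and sequential compactness are topological invariants, $K$ inherits both properties.

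The one genuinely non-standard step, and the point at which the hypothesis on $Y$ is essential, is the sequentially compact case of the closedness argument. Continuity of $f$ delivers only sequential compactness of $f(C)$, and in a general Hausdorff space a sequentially compact set need be neither compact nor closed. The assumption that compactness and sequential compactness coincide for subsets of $Y$ is exactly what converts sequential compactness of $f(C)$ into compactness, and thence into closedness via the Hausdorff property. Everything else is a routine transcription of the classical argument.
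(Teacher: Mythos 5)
Your proof is correct and follows essentially the same route as the paper's: both arguments reduce the problem to showing that $f$ maps closed sets to closed sets, handle the compact and sequentially compact cases in parallel, and use the hypothesis on $Y$ precisely to upgrade sequential compactness of $f(C)$ to compactness (hence closedness, by the Hausdorff property). The only cosmetic difference is that the paper first relabels so that $Y=f(K)$, whereas you work in $Y$ directly and note that closedness in $Y$ implies closedness in $f(K)$; the substance is identical.
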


\begin{proof} Note that if $K$ is compact, so is $f(K)$, and if $K$ is sequentially compact, so is $f(K)$. Since compactness and sequential compactness are equivalent in $Y$, in either case, $f(K)$ is both compact and sequentially compact.  Since $Y$ is Hausdorff, $f(K)$ is closed. From this it follows that a subset of $f(K)$ is compact if and only if it is sequentially compact. Therefore by relabeling, we may assume $Y=f(K)$ and assume $f$ is a bijection.  In the case $K$ is compact and the case that $K$ is sequentially compact, it is sufficient to show that if $C\subset K$ is closed, $f(C)$ is closed. 

We first provide the proof in the case that $K$ is compact. This is standard, but we provide the proof to help illustrate the case in which $X$ is sequentially compact.   Fix $C\subset K$ closed. Since $C$ is a closed subset of a compact set, $C$ is compact, and so is $f(C)$.  Since $Y$ is Hausdorff, $f(C)$ is closed. 

Now assume $K$ is sequentially compact and fix $C\subset K$ closed. Then $C$ is also sequentially compact, as is $f(C)$. In this case, $f(C)$ is also compact, and therefore closed. This completes the proof that $f$ is a homeomorphism. 

Since $K$ is homeomorphic to $f(K)$, which is both compact and sequentially compact, $K$ is both compact and sequentially compact. 

\end{proof}

\begin{corollary} Fix $0<\xi<\omega_1$. \begin{enumerate}[(i)]\item A subset $K$ of $X$ is $\xi$-weakly compact if and only if it is sequentially compact in the $\tau_\xi$ topology if and only if it is compact in the $\tau_\xi$ topology.  In this case, the $\tau_\xi$ and weak topologies on $K$ coincide. \item A subset $K$ of $X$ is $\xi$-weakly precompact if and only if it is relatively compact in the $\tau_\xi$ topology, and in this case $\overline{K}^\text{\emph{weak}}=\overline{K}^{\tau_\xi}$, and these two sets are homeomorphic with their weak and $\tau_\xi$ topologies.   \end{enumerate}

\end{corollary}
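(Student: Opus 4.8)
The plan is to derive both parts purely formally from three ingredients already established: the identification of $\xi$-weak convergence with $\tau_\xi$-convergence (Lemma \ref{topology2}(iv)), the abstract topological Lemma \ref{magic}, and Proposition \ref{conan}. The only external input I would invoke is the Eberlein--\v{S}mulian theorem, which guarantees that in $(X,\text{weak})$ a subset is compact exactly when it is sequentially compact; this is precisely the hypothesis that the target space $Y$ in Lemma \ref{magic} must satisfy, and the weak topology is Hausdorff, so $(X,\text{weak})$ qualifies as the space $Y$.

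For (i), I would first observe that sequential compactness in $\tau_\xi$ is, after invoking Lemma \ref{topology2}(iv), literally the definition of $\xi$-weak compactness: a sequence in $K$ has a subsequence $\tau_\xi$-convergent to a point of $K$ if and only if it has a subsequence $\xi$-weakly convergent to a point of $K$. To pass between sequential compactness and compactness in $\tau_\xi$, I would apply Lemma \ref{magic} to the identity inclusion $\iota\colon (K,\tau_\xi)\to (X,\text{weak})$. This map is injective and, since $\tau_\xi$ is finer than the weak topology (Lemma \ref{topology2}(ii)), continuous. Applying Lemma \ref{magic} once with $(K,\tau_\xi)$ assumed sequentially compact and once with it assumed compact yields in both cases that $(K,\tau_\xi)$ is simultaneously compact and sequentially compact, and that $\iota$ is a homeomorphism onto its image $\iota(K)=K$ carrying the weak subspace topology. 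The homeomorphism statement is exactly the assertion that the $\tau_\xi$ and weak topologies agree on $K$.

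Part (ii) I would reduce to (i) together with Proposition \ref{conan}. If $K$ is $\xi$-weakly precompact, then $W:=\overline{K}^{\text{weak}}$ is $\xi$-weakly compact by Proposition \ref{conan}, hence $\tau_\xi$-compact with $\tau_\xi|_W=\text{weak}|_W$ by (i). Since $\tau_\xi$ refines the weak topology, $W$ is $\tau_\xi$-closed, so $\overline{K}^{\tau_\xi}\subseteq W$; and because $K$ is weakly dense in $W$ while the two subspace topologies on $W$ coincide, $K$ is also $\tau_\xi$-dense in $W$, forcing $\overline{K}^{\tau_\xi}=W=\overline{K}^{\text{weak}}$. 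This set is $\tau_\xi$-compact, so $K$ is relatively compact in $\tau_\xi$, and the two closures are equal as sets carrying identical subspace topologies, hence homeomorphic. Conversely, if $\overline{K}^{\tau_\xi}$ is $\tau_\xi$-compact, then it is $\xi$-weakly compact by (i), so every sequence in $K$ lies in it and has a $\xi$-weakly convergent subsequence, giving that $K$ is $\xi$-weakly precompact.

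The main point requiring care, rather than a genuine obstacle, is the bookkeeping about which topology is finer and the resulting direction of the closure inclusions in (ii), together with the clean verification that $(X,\text{weak})$ meets the hypotheses of Lemma \ref{magic} through Eberlein--\v{S}mulian. Once those are in place, everything is a direct transcription of the definitions via Lemma \ref{topology2}(iv).
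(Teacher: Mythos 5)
Your proposal is correct and follows essentially the same route as the paper: Lemma \ref{topology2}(iv) to identify $\xi$-weak compactness with $\tau_\xi$-sequential compactness, Lemma \ref{magic} together with the Eberlein--\v{S}mulian theorem to pass to $\tau_\xi$-compactness and to the coincidence of the $\tau_\xi$ and weak topologies, and Proposition \ref{conan} plus part (i) for the statement about precompactness and closures. The only differences are cosmetic (you take the target space in Lemma \ref{magic} to be all of $(X,\mathrm{weak})$ rather than $K$ with its weak topology, and you obtain the closure equality in the converse direction of (ii) by citing the forward direction rather than re-deriving it), and you correctly invoke Proposition \ref{conan} where the paper's text nominally cites the convex-case Proposition \ref{barbarian}.
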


\begin{proof}$(i)$ Recall that a sequence $(x_n)_{n=1}^\infty\subset X$ is $\xi$-weakly convergent to $x\in X$ if and only if is it convergent to $x$ in the $\tau_\xi$ topology.  With this fact, it is immediate from the definition of $\xi$-weakly compact that $K\subset X$ is $\xi$-weakly compact if and only if it is sequentially compact in the $\tau_\xi$ topology.    

Now assume that $K\subset X$ is either compact or sequentially compact in the $\tau_\xi$ topology. Let $Y=K$ endowed with its weak topology. Since the $\tau_\xi$ topology is finer than the weak topology, $f:K\to Y$ is a continuous bijection. By Lemma \ref{magic}, $K$ is both compact and sequentially compact, and the $\tau_\xi$ and weak topologies coincide on $K$. This concludes $(i)$.  We note that in order to apply Lemma \ref{magic}, we use the Eberlein-\v{S}mulian Theorem to deduce that a subset of $Y$ is compact if and only if it is sequentially compact.

$(ii)$ For any $K\subset X$, since the $\tau_\xi$ topology is finer than the weak topology, $\overline{K}^{\tau_\xi}\subset \overline{K}^{\text{weak}}$.    

If $K$ is $\xi$-weakly precompact, then $\overline{K}^\text{weak}$ is $\xi$-weakly compact by Proposition \ref{barbarian}, and the weak and $\tau_\xi$ topologies are the same on $K$ by $(i)$. This means the $\tau_\xi$ and weak topologies are the same on $K$, $ \overline{K}^{\tau_\xi}$, and $\overline{K}^{\text{weak}}$. In particular, these last two sets are equal. 

Now if $K$ is relatively compact in the $\tau_\xi$ topology, then $\overline{K}^{\tau_\xi}$ is compact in the $\tau_\xi$ topology. It is therefore weakly compact, and weakly closed, so $\overline{K}^\text{weak}\subset \overline{K}^{\tau_\xi}$.  This shows that $\overline{K}^{\tau_\xi}=\overline{K}^{\text{weak}}$. Since the $\tau_\xi$ and weak topologies coincide on $\overline{K}^{\tau_\xi}$, they coincide on $\overline{K}^{\text{weak}}$.

\end{proof}

\section{Convexity}

In this section, we discuss the interplay between  the $\tau_\xi$ topology and convexity. For finite, non-zero values of $\xi$, the $\tau_\xi$ topology need not be locally convex.  We modify an example from \cite{LA} to give an example of a Banach space $Y_k$ with normalized, $1$-unconditional, $k+1$-weakly null basis such that the closed, convex hull of the basis is not $k+1$-weakly compact. 

In what follows, for $y\in c_{00}$ and $I\subset \nn$, $Iy$ is the projection of $y$ onto $\text{span}\{e_n: n\in I\}$.   For the proof, we recall the definition for $m,n\in\nn$ $$\mathcal{S}_m[\mathcal{S}_n]=\{\varnothing\}\cup \Bigl\{\bigcup_{i=1}^t E_i: \varnothing\neq E_i, E_1<\ldots <E_t, E_i\in \mathcal{S}_n, (\min E_i)_{i=1}^t\in \mathcal{S}_m\Bigr\}.$$  
It is a standard fact concerning $\mathcal{S}_n$, $n<\omega$, that $\mathcal{S}_m[\mathcal{S}_n]=\mathcal{S}_{m+n}=\mathcal{S}_n[\mathcal{S}_m]$ for all non-negative integers $m,n$.

\begin{lemma} Let $Y$ be a Banach space such that the canonical $c_{00}$ basis is a normalized, unconditional Schauder basis for $Y$. For $k\in\nn$, let us define the norm $$\|y\|_k= \sup \Bigl\{\sum_{i=1}^t \|I_i y\|_Y: I_1<\ldots <I_t, (\min I_i)_{i=1}^t\in\mathcal{S}_k\Bigr\}$$ and let $Y_k$ be the completion of $c_{00}$ with respect to this norm.  \begin{enumerate}[(i)]\item If $(y_n)_{n=1}^\infty$ is a convex block sequence of the $c_{00}$ basis which is an $\ell_1^1$-spreading model in $Y$, then $(y_n)_{n=1}^\infty$ is an $\ell_1^{k+1}$-spreading model in $Y_k$.  \item If the canonical $c_{00}$ basis is $1$-weakly null in $Y$, then it is $k+1$-weakly null in $Y_k$. \end{enumerate}
\label{fin}

\end{lemma}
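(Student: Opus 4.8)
The plan is to handle the two parts separately: (i) is a direct norm computation built on the decomposition $\mathcal{S}_{k+1}=\mathcal{S}_k[\mathcal{S}_1]$, while (ii) I would reduce to a single assertion about $\ell_1^{k+1}$-spreading models. For (i), write $c=\inf\{\|\sum_{n\in G}a_ny_n\|_Y: G\in\mathcal{S}_1,\ \sum_{n\in G}|a_n|=1\}>0$ for the $\ell_1^1$-spreading model constant of $(y_n)$ in $Y$, and set $F_n=\supp(y_n)$, so $F_1<F_2<\cdots$ is a block sequence and hence $\min F_n\geqslant n$. Fix $E\in\mathcal{S}_{k+1}$ and scalars $(a_n)_{n\in E}$ with $\sum_{n\in E}|a_n|=1$, and put $z=\sum_{n\in E}a_ny_n$. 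Using the standard identity $\mathcal{S}_{k+1}=\mathcal{S}_k[\mathcal{S}_1]$, decompose $E=\bigcup_{r=1}^sG_r$ with $G_1<\cdots<G_s$, each $G_r\in\mathcal{S}_1$, and $(\min G_r)_{r=1}^s\in\mathcal{S}_k$. Let $I_r=\bigcup_{n\in G_r}F_n$; then $I_1<\cdots<I_s$ and $\min I_r=\min F_{\min G_r}\geqslant\min G_r$, so by the spreading property $(\min I_r)_{r=1}^s\in\mathcal{S}_k$, i.e. $(I_r)_{r=1}^s$ is admissible for $\|\cdot\|_k$. Since the $y_n$ have disjoint supports, $I_rz=\sum_{n\in G_r}a_ny_n$, and by homogeneity of the $\ell_1^1$-estimate $\|I_rz\|_Y\geqslant c\sum_{n\in G_r}|a_n|$. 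Summing,
\[
\|z\|_k\geqslant\sum_{r=1}^s\|I_rz\|_Y\geqslant c\sum_{r=1}^s\sum_{n\in G_r}|a_n|=c,
\]
so $(y_n)$ is an $\ell_1^{k+1}$-spreading model in $Y_k$ with constant $c$. This settles (i); the only mild point is the admissibility check, which is exactly the interplay of $\min F_n\geqslant n$ with spreading.

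For (ii) I would first note that, since $(e_n)$ is unconditional in $Y_k$, convex and $\ell_1$-normalized combinations have comparable norms, so one may pass freely between them. With this, if $(e_n)$ fails to be weakly null in $Y_k$ there are a subsequence $(e_{n_j})$ and $f\in Y_k^*$ with $|f(e_{n_j})|\geqslant\delta>0$ for all $j$; choosing unimodular multiples and using unconditionality shows that every convex combination of $(e_{n_j})$, in particular every one supported on an $\mathcal{S}_{k+1}$-set, has norm bounded below, so $(e_{n_j})$ is an $\ell_1^{k+1}$-spreading model in $Y_k$. Consequently it suffices to prove the single statement that \emph{no} subsequence of $(e_n)$ is an $\ell_1^{k+1}$-spreading model in $Y_k$: granting this, $(e_n)$ is both weakly null and free of $\ell_1^{k+1}$-spreading model subsequences, hence $(k+1)$-weakly null.

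To prove that statement I would argue by contraposition, aiming to turn an $\ell_1^{k+1}$-spreading model subsequence in $Y_k$ into an $\ell_1^1$-spreading model subsequence in $Y$, contradicting $1$-weak nullity. A clean way to organize this is the isometric identity $(Y_j)_1=Y_{j+1}$, which I would derive from $\mathcal{S}_{j+1}=\mathcal{S}_1[\mathcal{S}_j]$ by composing the two suprema defining the norms and matching the grouped admissible families (the inner families sit in $\mathcal{S}_j$, their heads in $\mathcal{S}_1$, so the combined family lies in $\mathcal{S}_1[\mathcal{S}_j]$). Iterating reduces everything to the atomic $\mathcal{S}_1$-step, namely that if the basis has no $\ell_1^j$-spreading model subsequence in $Z$ then it has none of order $\ell_1^{j+1}$ in $Z_1$. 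For the atomic step I would invoke the repeated-averages description of $j$-weak nullity, testing against an order-$(k+1)$ average $z=\sum_{l\in E}a_le_{n_l}$ with $E\in\mathcal{S}_{k+1}$ decomposed as $\mathcal{S}_k[\mathcal{S}_1]$ and estimating $\|z\|_k=\sup\{\sum_i\|I_iz\|_Y:(\min I_i)\in\mathcal{S}_k\}$ over all admissible families at once.

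The main obstacle is precisely this last estimate, and it is genuinely harder than part (i). There, the admissible family is handed to us by the decomposition of $E$; here the supremum ranges over \emph{all} $\mathcal{S}_k$-admissible $(I_i)$, and the $\|\cdot\|_k$-norm can ``undo'' an average: a combination that is small in $Y$ because it averages over $\mathcal{S}_1$-blocks may be large in $Y_k$ when the adversary splits it into singletons, each then contributing its full coefficient. Thus a naive uniform average fails, and the averages must be chosen with the $Y_k$-norm in mind so that $\sum_i\|I_iz\|_Y$ remains small \emph{simultaneously} for every admissible $(I_i)$. Controlling this competition between the averaging gain in $Y$ and the splitting loss in $Y_k$, through the combinatorics of $\mathcal{S}_{k+1}=\mathcal{S}_k[\mathcal{S}_1]$, is the technical heart of the argument.
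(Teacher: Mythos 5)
Part (i) of your proposal is correct and is essentially the paper's own argument: decompose $E\in\mathcal{S}_{k+1}$ using $\mathcal{S}_{k+1}=\mathcal{S}_k[\mathcal{S}_1]$, group the supports accordingly, check $\mathcal{S}_k$-admissibility via the spreading property, and apply the $\ell_1^1$-estimate inside each group. (One cosmetic point: your sets $I_r=\bigcup_{n\in G_r}F_n$ need not be intervals, which the definition of $\|\cdot\|_k$ requires; replace them by their interval hulls, as the paper does --- this changes neither the minima nor the projections of $z$.) Your opening reduction in (ii) is also exactly the paper's first step: unconditionality converts failure of weak nullity into an $\ell_1^{k+1}$-spreading model subsequence, so it suffices to show that no subsequence of $(e_n)$ is an $\ell_1^{k+1}$-spreading model in $Y_k$.

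The genuine gap is that you never prove that last statement. You reorganize it (via the identity $(Y_j)_1=Y_{j+1}$, which is fine) into an ``atomic step,'' and then explicitly defer the estimate behind that step as ``the technical heart.'' But that estimate \emph{is} the mathematical content of part (ii); everything preceding it is soft, so the proposal stops precisely where the proof begins. For the record, here is how the paper resolves the competition you describe. By induction, $(e_n)$ is $k$-weakly null in $Y_{k-1}$, so given a subsequence $(e_{m_i})$ and $\ee>0$ one recursively chooses $E_1<E_2<\cdots$, $E_n\in\mathcal{S}_k$, and coefficients with $\sum_{i\in E_n}|a_i|=1$ and $\|\sum_{i\in E_n}a_ie_{m_i}\|_{k-1}<\frac{\ee/2}{p_{n-1}}$, where $p_{n-1}=\max\{m_i:i\in E_{n-1}\}$; that is, each block is made small in the \emph{previous} norm by an amount calibrated to the position of the \emph{previous} block. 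Then the uniform average $z=\frac{1}{m}\sum_{i=m+1}^{2m}\sum_{j\in E_i}a_je_{m_j}$, with $\frac1m<\ee/2$ and $\bigcup_{i=m+1}^{2m}E_i\in\mathcal{S}_{k+1}$, satisfies $\|z\|_k<\ee$ against \emph{every} admissible family simultaneously: given intervals $I_1<\cdots<I_t$ with $(\min I_r)_{r=1}^t\in\mathcal{S}_k=\mathcal{S}_1[\mathcal{S}_{k-1}]$, split them into $s$ consecutive $\mathcal{S}_{k-1}$-admissible groups, so that each group applied to any $x$ contributes at most $\|x\|_{k-1}$. If $E_l$ is the first block the family meets, then $s\leqslant\min I_1\leqslant p_l$, so each later block $E_i$ ($i>l$) contributes, before the averaging factor, at most $s\cdot\frac{\ee/2}{p_{i-1}}\leqslant\ee/2$; since there are fewer than $m$ such blocks, their averaged total is below $\ee/2$. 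The single block $E_l$ is estimated crudely in $\ell_1$ by $1$ (disjoint intervals, normalized basis of $Y$) and is absorbed by the coefficient $\frac1m<\ee/2$; blocks before $E_l$ contribute nothing. So the ``splitting loss'' is not defeated uniformly: it is paid in full exactly once and killed by the averaging coefficient, and everywhere else it is dominated by the recursive calibration $\frac{\ee/2}{p_{n-1}}$. Without this (or an equivalent) construction, your outline of (ii) does not compile into a proof.
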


\begin{proof} For both parts, note that the canonical $c_{00}$ basis is also normalized and unconditional in $Y_k$. 

$(i)$ Assume $(y_n)_{n=1}^\infty$ is a convex block sequence of the canonical $c_{00}$ basis which is an $\ell_1^1$-spreading model in $Y$. Let $$\ee=\inf \{\|y\|: E\in \mathcal{S}_1, y=\sum_{n\in E} a_ny_n, \sum_{n\in E}|a_n|=1\}>0.$$  Since the canonical $c_{00}$ basis is normalized in $Y_k$, $\|y_n\|_k\leqslant 1$ for all $n\in\nn$, and $(y_n)_{n=1}^\infty$ is bounded.    Fix $E\in \mathcal{S}_{k+1}$ and scalars $(a_i)_{i\in E}$.    Since   $\mathcal{S}_{k+1}=\mathcal{S}_k[\mathcal{S}_1]$, we may write $$E=\bigcup_{i=1}^t E_i$$ for some non-empty $E_1<\ldots <E_t$, $E_i\in \mathcal{S}_1$, $(\min E_i)_{i=1}^t\in \mathcal{S}_k$.  For each $1\leqslant i\leqslant t$, let $$I_i= [\min \supp(y_{\min E_i}), \max \supp(y_{\max E_i})].$$  Then $\min I_i\geqslant \min E_i$, so $(\min I_i)_{i=1}^t\in \mathcal{S}_k$.  Furthermore, $I_1<\ldots <I_t$.    Therefore \begin{align*} \|\sum_{n\in E} a_ny_n\|_k &\geqslant \sum_{i=1}^t \|I_i\sum_{n\in E}a_ny_n\|_Y = \sum_{i=1}^t \|\sum_{n\in E_i} a_ny_n\|_Y \geqslant \ee\sum_{i=1}^t \sum_{n\in E_i}|a_n| \\ & = \ee \sum_{n\in E}|a_n|.  \end{align*}   This shows that $(y_n)_{n=1}^\infty$ is an $\ell_1^{k+1}$-spreading model in $Y_k$. 

$(ii)$  Since the canonical $c_{00}$ basis $(e_n)_{n=1}^\infty$ is bounded and unconditional in $Y_k$, it is either weakly null or it has a subsequence equivalent to the canonical $\ell_1$ basis.  Thus if we can show that $(e_n)_{n=1}^\infty\subset Y_k$ has no subsequence which is an $\ell_1^{k+1}$-spreading model, we will know that it has no subsequence equivalent to the canonical $\ell_1$ basis, and must be weakly null. In this case, we will have shown that $(e_n)_{n=1}^\infty\subset Y_k$ is weakly null with no subsequence that is an $\ell_1^{k+1}$ spreading model, and is therefore $k+1$-weakly null.    

Note that the condition that the canonical basis of $Y_k$ has no subsequence which is an $\ell_1^{k+1}$-spreading model implies that for any $m_1<m_2<\ldots$, $\ee>0$, and $j\in\nn$, we can find $E\in \mathcal{S}_{k+1}$ such that $\min E>j$ and scalars $(a_i)_{i\in E}$ such that $\sum_{i\in E}|a_i|=1$ and $\|\sum_{i\in E}a_ie_{m_i}\|_k<\ee$. To see this, apply the preceding paragraph to the sequence $m_{j+1}, m_{j+2}, \ldots$ to find $F\in \mathcal{S}_\xi$ and scalars $(b_i)_{i\in F}$ such that $\sum_{i\in F}|b_i|=1$ and $\|\sum_{i\in F}b_i e_{m_{i+j}}\|_k<\ee$. Then let $E=\{i+j: i\in F\}\in \mathcal{S}_{k+1}$ and $a_{i+j}=b_i$ for $i\in F$. 

Let $Y_0=Y$. By the preceding paragraph, for $k\in \nn\cup \{0\}$, the condition that $(e_n)_{n=1}^\infty$ is $k+1$-weakly null in $Y_k$ is equivalent to the condition that it has no subsequence which is an  $\ell_1^{k+1}$-spreading model, which is equivalent to the condition that for any $m_1<m_2<\ldots$ and $\ee>0$, there exist  $E\in \mathcal{S}_\xi$ and $(a_i)_{i\in E}$ such that $\sum_{i\in E}|a_i|=1$ and $\|\sum_{i\in E}a_i e_{m_i}\|_k<\ee$. We prove this by induction on $k\in \nn\cup \{0\}$, where the base case $k=0$ is true by hypothesis.   Now assume that for $k\in\nn$, $(e_n)_{n=1}^\infty$ is $k$-weakly null in $Y_{k-1}$.  Fix $m_1<m_2<\ldots$ and $\ee>0$. We may assume $\ee<1$.   By recursive applications of the preceding paragraph, and with $p_0=1$ and $p_n=\max \{m_i: i\in E_n\}$ for $n\in\nn$, we may find $E_1<E_2<\ldots$, $E_n\in \mathcal{S}_k$, and scalars $(a_i)_{i\in E_n}$ such that $\sum_{i\in E_n}|a_i|=1$ and $$\|\sum_{i\in E_n} a_i e_{m_i}\|_{k-1} < \frac{\ee/2}{ p_{n-1}}.$$    Note that $1=p_1<p_2<\ldots$.    Now fix $m\in \nn$ such that $1/m<\ee/2$.  We will show that $$\|\sum_{i=m+1}^{2m} \frac{1}{m}\sum_{j\in E_i} a_j e_{m_j}\|_k<\ee.$$  Since $\cup_{i=m+1}^{2m} E_i\in \mathcal{S}_{k+1}$ and $\sum_{i=m+1}^{2m} \frac{1}{m}\sum_{j\in E_i} |a_j|=1$, this will finish the proof.  Fix intervals $I_1<\ldots <I_t$ such that $J=(\min I_i)_{i=1}^t\in \mathcal{S}_{k+1}$ and $$\|\sum_{i=m+1}^{2m} \frac{1}{m}\sum_{j\in E_i} a_j e_{m_j}\|_k= \sum_{r=1}^t \Bigl\|I_r \Bigl(\sum_{i=m+1}^{2m} \frac{1}{m}\sum_{j\in E_i} a_j e_{m_j}\Bigr)\Bigr\|_Y.$$  Since $\mathcal{S}_{k+1}=\mathcal{S}_1[\mathcal{S}_k]$, we may write $J=\cup_{i=1}^s J_i$ with $J_1<\ldots <J_s$, $\varnothing\neq J_i\in \mathcal{S}_k$, and $(\min J_i)_{i=1}^s\in \mathcal{S}_1$. By omitting any superfluous $J_i$ set and relabeling, we may assume that $$J_1\sum_{i=m+1}^{2m} \frac{1}{m}\sum_{j\in E_i} a_j e_{m_j}\neq 0.$$ The  membership $(\min J_i)_{i=1}^s\in \mathcal{S}_1$ means that $s\leqslant J_1$.    Now let $l$ be the minimum $i$ such that $m+1\leqslant i\leqslant 2m$ and $J_1 \sum_{j\in E_i} a_j e_{m_j}\neq 0$.  Then $$s\leqslant \min J_1 \leqslant p_l.$$   Note also that for each $l< i\leqslant 2m$, $$\sum_{r=1}^s \|J_r\sum_{j\in E_i} a_j e_{m_j}\|_Y \leqslant s\|\sum_{j\in E_i}a_je_{m_j}\|_{k-1} \leqslant s \cdot \frac{\ee/2}{p_{i-1}} \leqslant \ee/2.$$  Then \begin{align*} \|\sum_{i=m+1}^{2m} \frac{1}{m}\sum_{j\in E_i} a_j e_{m_j}\|_k & = \sum_{r=1}^s \Bigl\|I_r \Bigl(\sum_{i=l}^{2m} \frac{1}{m}\sum_{j\in E_i} a_j e_{m_j}\Bigr)\Bigr\|_Y \\ & \leqslant \frac{1}{m}\|\sum_{j\in E_l}a_je_{m_j}\|_{\ell_1} + \frac{1}{m} \sum_{i=l+1}^{2m} \ee/2<\ee/2+\ee/2=\ee.\end{align*}

\end{proof}

We next recall a construction from \cite{LA} which gives the requisite $Y$ for the preceding lemma. 

\begin{theorem} There exists a hereditary set $\mathcal{F}$ of finite subsets of $\nn$ which contains all singletons such that, if $Y$ is the completion of $c_{00}$ with respect to the norm $$\|\sum_{i=1}^\infty a_ie_i\|_Y= \sup \{\sum_{i\in E}|a_i|: E\in \mathcal{F}\},$$ then  the canonical $c_{00}$ basis is $1$-weakly null in $Y$, but the basis of $Y$ admits a convex block sequence which is an $\ell_1^1$-spreading model. 

\label{LA}
\end{theorem}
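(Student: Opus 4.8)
The plan is to realize $Y$ as the space $X_{\mathcal F}$ with norm $\|x\|=\sup_{E\in\mathcal F}\sum_{i\in E}|x_i|$, to fix the family $\mathcal F$ from \cite{LA} (with the small modification needed to land in the $\ell_1^1$ setting), and to verify three things in turn. First, since $\mathcal F$ is hereditary and contains all singletons, the norm is monotone in the $|x_i|$ and satisfies $\|e_n\|=1$, so the canonical basis is normalized and $1$-unconditional; this is immediate. The two substantive goals are: \textbf{(A)} the basis is $1$-weakly null, and \textbf{(B)} some convex block sequence is an $\ell_1^1$-spreading model. For \textbf{(A)} I would use exactly the dichotomy invoked later in Lemma~\ref{fin}(ii): a bounded unconditional basis is either weakly null or has a subsequence equivalent to the $\ell_1$-basis, and any $\ell_1$-subsequence is in particular an $\ell_1^1$-spreading model. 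Hence it suffices to prove that \emph{no} subsequence of $(e_n)$ is an $\ell_1^1$-spreading model: this simultaneously forces weak nullity and supplies the ``no $\ell_1^1$-spreading subsequence'' clause, which together are the definition of $1$-weak nullity.

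For \textbf{(B)} I would extract from the construction in \cite{LA} a block sequence $(A_j)_{j=1}^\infty$ of finite sets (with $A_1<A_2<\cdots$) and a constant $\delta>0$ witnessing a \emph{fractional covering} property: for every $E\in\mathcal S_1$ there exists $F\in\mathcal F$ with $|F\cap A_j|\geqslant\delta|A_j|$ for all $j\in E$. Setting $y_j=|A_j|^{-1}\sum_{i\in A_j}e_i$, for any $E\in\mathcal S_1$ and scalars with $\sum_{j\in E}|a_j|=1$ I would choose such an $F$ and estimate, using disjointness of the blocks, $\|\sum_{j\in E}a_jy_j\|\geqslant\sum_{i\in F}|(\sum_{j\in E}a_jy_j)_i|=\sum_{j\in E}|a_j|\,|F\cap A_j|/|A_j|\geqslant\delta$. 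Thus $(y_j)$ is a convex block sequence with $\inf$ over all such $\mathcal S_1$-combinations at least $\delta>0$, i.e.\ an $\ell_1^1$-spreading model. The whole content of \textbf{(B)} is therefore reduced to producing the blocks $(A_j)$ and the fractional-covering sets in $\mathcal F$.

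For the ``no subsequence is an $\ell_1^1$-spreading model'' half of \textbf{(A)}, I would prove the dual \emph{thinness} property of $\mathcal F$: every infinite $M\subset\nn$ admits, for each $\ee>0$, a finite $G\subseteq M$ with $G\in\mathcal S_1$ whose largest $\mathcal F$-subset has size $<\ee|G|$. Granting this, the equal-coefficient combination $|G|^{-1}\sum_{i\in G}e_i$ has norm $|G|^{-1}\max_{F\in\mathcal F}|G\cap F|<\ee$, so the relevant infimum vanishes along every subsequence. The main obstacle is precisely the coexistence of this thinness with the covering property used in \textbf{(B)}: heredity means that a cover $F$ with $|F\cap A_j|\geqslant\delta|A_j|$ already contains many transversals through the blocks, which naively threatens to capture a spread-out subsequence as well. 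The resolution is the heart of López-Abad's combinatorics, and is where I expect the real work to lie: the covering sets must be arranged (via the bounded complexity/compactness of $\mathcal F$ together with a Ramsey-type genericity) so that, while a \emph{uniform} average over a block is always captured in fixed proportion, any \emph{fixed} sequence of single points can be refined to an $\mathcal S_1$-selection on which every $F\in\mathcal F$ meets only a vanishing fraction. I would therefore organize the proof so that, after recalling $\mathcal F$ from \cite{LA}, the entire difficulty is isolated into verifying these two complementary combinatorial properties of $\mathcal F$, from which \textbf{(A)} and \textbf{(B)} follow by the short arguments above.
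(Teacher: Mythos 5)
The first thing to note is that the paper itself offers no proof of Theorem \ref{LA}: it is imported verbatim from \cite{LA} as a black box (``We next recall a construction from \cite{LA}\ldots''), so your deferral of the actual construction of $\mathcal{F}$ to L\'opez-Abad--Ruiz--Tradacete puts you at parity with the paper rather than behind it. Within your added framework, the reduction of $1$-weak nullity to ``no subsequence of $(e_n)$ is an $\ell_1^1$-spreading model'' via the unconditional dichotomy is correct (it is exactly the device the paper uses in Lemma \ref{fin}(ii)), and your covering-property computation for (B) is correct: disjointness of the blocks $A_j$ and $F\in\mathcal{F}$ give $\|\sum_{j\in E}a_jy_j\|_Y\geqslant\sum_{j\in E}|a_j|\,|F\cap A_j|/|A_j|\geqslant\delta$ for any $E\in\mathcal{S}_1$ and coefficients with $\sum_{j\in E}|a_j|=1$.

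There is, however, a genuine flaw in the thinness half of (A) as you wrote it. An $\ell_1^1$-spreading model is defined by an infimum over sets $E\in\mathcal{S}_1$ of \emph{indices} of the sequence in question; for a subsequence $(e_{m_i})_{i=1}^\infty$ this means $|E|\leqslant\min E$ as a set of $i$'s. Your thinness property produces $G\subset M$ with $G\in\mathcal{S}_1$ as a set of \emph{values}, and the associated index set $E'=\{i:m_i\in G\}$ need not be admissible: for $M=\{100,101,102,\ldots\}$ and $G$ its first fifty elements, $G\in\mathcal{S}_1$ but $E'=\{1,\ldots,50\}\notin\mathcal{S}_1$. Since index-admissible combinations form a strictly smaller class than value-admissible ones (by the spreading property, $E\in\mathcal{S}_1$ implies $\{m_i:i\in E\}\in\mathcal{S}_1$, but not conversely), exhibiting one small value-admissible average does not show that the defining infimum vanishes, so ``the relevant infimum vanishes along every subsequence'' does not follow as stated. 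The repair is short but must be said: pass to the top half $G''$ of $G$. Writing $G=\{m_{i_1},\ldots,m_{i_n}\}$ and taking the last $\lceil n/2\rceil$ elements, the fact that $i_j\geqslant j$ for any strictly increasing index sequence forces the index set of $G''$ to lie in $\mathcal{S}_1$, while heredity of $\mathcal{F}$ gives $|G''\cap F|\leqslant|G\cap F|<\varepsilon|G|\leqslant 2\varepsilon|G''|$ for every $F\in\mathcal{F}$; hence the uniform average over $G''$ is an index-admissible convex combination of norm less than $2\varepsilon$. Finally, even with this repair, the real content of the theorem---that a single hereditary $\mathcal{F}$ enjoys both your covering and your thinness properties---is precisely what you have not verified: \cite{LA} phrases its results in terms of Banach--Saks sets and Ces\`aro convergence, so matching their construction to your two properties (or translating via the dichotomy that a weakly null sequence has either a Ces\`aro-null subsequence or an $\ell_1^1$-spreading model subsequence) is a step you would still owe, although the paper under review owes it equally.
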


\begin{theorem} For every $k\in\nn$, there exists a hereditary set $\mathcal{F}_k$ containing all singletons such that the Banach space $Y_k$ which is the completion of $c_{00}$ with respect to the norm $$\|\sum_{i=1}^\infty a_ie_i\|_Y= \sup \{\sum_{i\in E}|a_i|: E\in \mathcal{F}_k\}$$ has the property that its canonical basis is $k+1$-weakly null, but admits a convex block sequence $(y_n)_{n=1}^\infty$ of the basis which is an $\ell_1^{k+1}$-spreading model. Furthermore, we may choose the sequence $(y_n)_{n=1}^\infty$ to be independent of $k$. 

\label{build}

\end{theorem}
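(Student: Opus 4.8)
The plan is to combine Theorem \ref{LA} with Lemma \ref{fin}. Let $\mathcal{F}$ and $Y$ be the hereditary family and Banach space furnished by Theorem \ref{LA}, so that the canonical $c_{00}$ basis is $1$-weakly null in $Y$ while some convex block sequence $(y_n)_{n=1}^\infty$ of the basis is an $\ell_1^1$-spreading model in $Y$. Since $\mathcal{F}$ is hereditary and contains all singletons, the coordinate functionals are normalized and, because the partial-sum projection onto $\{1,\ldots,n\}$ only restricts the sets $E\in\mathcal{F}$ over which the supremum is taken, these projections have norm $1$; hence the canonical basis is a normalized, $1$-unconditional Schauder basis for $Y$, so $Y$ satisfies the hypotheses of Lemma \ref{fin}. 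For $k\in\nn$ I would then form the space $Y_k$ as in Lemma \ref{fin}, with norm $\|\cdot\|_k$. Lemma \ref{fin}(ii) gives that the canonical basis is $k+1$-weakly null in $Y_k$, and Lemma \ref{fin}(i) gives that the very same convex block sequence $(y_n)_{n=1}^\infty$ is an $\ell_1^{k+1}$-spreading model in $Y_k$. Since $(y_n)_{n=1}^\infty$ is determined by $Y$ and $\mathcal{F}$, which do not depend on $k$, this already yields the ``furthermore'' clause.

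It remains to exhibit a hereditary family $\mathcal{F}_k$ containing all singletons for which $\|y\|_k=\sup\{\sum_{j\in G}|y_j|:G\in\mathcal{F}_k\}$; this identification of the norm is the only genuine computation. I would set
\[
\mathcal{F}_k=\{\varnothing\}\cup\Bigl\{\bigcup_{i=1}^t F_i:F_1<\ldots<F_t,\ \varnothing\neq F_i\in\mathcal{F},\ (\min F_i)_{i=1}^t\in\mathcal{S}_k\Bigr\},
\]
the Schreier composition of $\mathcal{S}_k$ with $\mathcal{F}$. The norm identity then follows by unwinding definitions. For any $I_1<\ldots<I_t$ with $(\min I_i)_{i=1}^t\in\mathcal{S}_k$, hereditarity of $\mathcal{F}$ gives $\|I_iy\|_Y=\sup\{\sum_{j\in F}|y_j|:F\in\mathcal{F},\ F\subset I_i\}$, and since the $I_i$ are disjoint the witnessing sets $F_i\subset I_i$ may be chosen independently, so that $\sum_{i=1}^t\|I_iy\|_Y=\sup\{\sum_{j\in G}|y_j|:G=\cup_i F_i,\ F_i\in\mathcal{F},\ F_i\subset I_i\}$; after discarding empty pieces each such $G$ lies in $\mathcal{F}_k$, because $\min F_i\geqslant\min I_i$ and $\mathcal{S}_k$ is spreading. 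Conversely, for $G=\cup_{i=1}^t F_i\in\mathcal{F}_k$ the intervals $I_i=[\min F_i,\max F_i]$ are disjoint and increasing with $(\min I_i)_{i=1}^t\in\mathcal{S}_k$ and $\|I_iy\|_Y\geqslant\sum_{j\in F_i}|y_j|$. Taking suprema on both sides gives $\|y\|_k=\sup\{\sum_{j\in G}|y_j|:G\in\mathcal{F}_k\}$.

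Finally I would verify the combinatorial properties of $\mathcal{F}_k$. It contains every singleton, since $\{n\}\in\mathcal{F}$ and $(n)\in\mathcal{S}_k$. For hereditarity, if $G=\cup_{i=1}^t F_i\in\mathcal{F}_k$ and $G'\subset G$, write $G'=\cup_i(F_i\cap G')$; each $F_i\cap G'$ lies in $\mathcal{F}$ by hereditarity of $\mathcal{F}$, and after discarding the empty pieces the surviving minima satisfy $\min(F_i\cap G')\geqslant\min F_i$, so they form a member of $\mathcal{S}_k$ because $\mathcal{S}_k$ is hereditary and spreading; thus $G'\in\mathcal{F}_k$. I expect the norm-identification step of the second paragraph to be the main obstacle, specifically the bookkeeping that lets one pass between admissible interval decompositions in the definition of $\|\cdot\|_k$ and members of the composed family $\mathcal{F}_k$, while the $k+1$-weak nullity, the spreading-model property, and the independence of $(y_n)_{n=1}^\infty$ from $k$ are all immediate consequences of Lemma \ref{fin}.
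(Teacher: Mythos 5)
Your proposal is correct and follows the same route as the paper: take $\mathcal{F}$ and $Y$ from Theorem \ref{LA}, define $\mathcal{F}_k$ as the Schreier-type composition of $\mathcal{S}_k$ with $\mathcal{F}$, and identify the resulting norm with the norm $\|\cdot\|_k$ of Lemma \ref{fin} so that both conclusions (and the independence of $(y_n)_{n=1}^\infty$ from $k$) follow at once. The only difference is that you write out the details — the isometry between the two descriptions of $Y_k$, the hereditarity of $\mathcal{F}_k$, and the verification that $Y$ satisfies the hypotheses of Lemma \ref{fin} — which the paper dismisses as ``easily verified,'' and your verifications are sound.
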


\begin{proof} For the $k=0$ case, we let $\mathcal{F}_0=\mathcal{F}$ and $Y_0=Y$, the space from Theorem \ref{LA}.  For $k>0$, we let $$\mathcal{F}_k=\Bigl\{\bigcup_{i=1}^t E_i: (\min E_i)_{i=1}^t\in \mathcal{S}_k, E_1<\ldots <E_t, E_i\in \mathcal{F}\Bigr\}.$$  It is easily verified that $Y_k$ as defined in the corollary is isometrically the same as $Y_k$ as defined in Lemma \ref{fin}, and therefore $(e_n)_{n=1}^\infty$ and $(y_n)_{n=1}^\infty$ have the requisite properties.

\end{proof}

This raises the following question. 

\begin{question} For $\xi<\omega_1$, does there exist $\zeta=\zeta(\xi)<\omega_1$ such that for any Banach space $X$ and any $\xi$-weakly compact subset $K$ of $X$, the closed, convex hull of $K$ is $\zeta$-weakly compact?

\end{question}

\begin{corollary} For each $k\in\nn$, $C(2^\nn)$ contains a $k$-weakly compact subset whose closed, convex hull is not $k$-weakly compact. Furthermore, the topology $\tau_k$ on $C(2^\nn)$ is not locally convex.

\label{pelc}
\end{corollary}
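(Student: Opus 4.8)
The plan is to pull the spaces $Y_{k-1}$ furnished by Theorem~\ref{build} into $C(2^\nn)$ and read both assertions off the canonical basis $(e_n)$ and its convex-block spreading model $(y_n)$. Fix $k\in\nn$ and apply Theorem~\ref{build} with index $k-1$: this produces a space $Y_{k-1}$ whose normalized, $1$-unconditional basis $(e_n)$ is $k$-weakly null, together with a convex block sequence $(y_n)$ of $(e_n)$ that is an $\ell_1^k$-spreading model. Since $Y_{k-1}$ is separable and $C(2^\nn)$ is isometrically universal for separable spaces (compose with a continuous surjection $2^\nn\to[0,1]$ to embed $C[0,1]$ isometrically into $C(2^\nn)$, then invoke Banach--Mazur), I would fix a linear isometry $T:Y_{k-1}\to C(2^\nn)$. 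The point is that all three relevant notions transfer faithfully: being an $\ell_1^k$-spreading model is a pure norm condition, hence preserved by the isometry $T$; weak nullity is preserved since $T$ is weak--weak continuous. Thus $(Te_n)$ is $k$-weakly null in $C(2^\nn)$, while $(Ty_n)\subset\mathrm{co}(Te_n:n\in\nn)$ is weakly null and (together with all its subsequences, $\mathcal{S}_k$ being spreading) an $\ell_1^k$-spreading model.

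For the first assertion, set $L=\{Te_n:n\in\nn\}\cup\{0\}$. Every sequence in $L$ has a constant subsequence (norm, hence $k$-weakly, convergent to a point of $L$) or a subsequence of $(Te_n)$ ($k$-weakly null, with limit $0\in L$), so $L$ is $k$-weakly compact. On the other hand $(Ty_n)$ lies in $\mathrm{co}(L)\subset\overline{\mathrm{co}}(L)$, is weakly null, and satisfies $0<\inf\{\|y'\|:E\in\mathcal{S}_k,\ y'\in\mathrm{co}(Ty_n:n\in E)\}$, since a convex combination is a special case of the normalization $\sum|a_i|=1$ appearing in the spreading-model estimate. Proposition~\ref{crit} (with $y=0$) then shows $\overline{\mathrm{co}}(L)$ is not $k$-weakly precompact, hence not $k$-weakly compact.

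For the non-local-convexity assertion I would argue directly, avoiding any appeal to completeness. Because $(Ty_n)$ is a weakly null $\ell_1^k$-spreading model, no subsequence is $k$-weakly convergent (the only candidate weak limit is $0$), so Lemma~\ref{topology2}(iii) makes $F:=\{Ty_n:n\in\nn\}$ a $\tau_k$-closed set; as $y_n\neq 0$ gives $Ty_n\neq 0$, the complement $U:=C(2^\nn)\setminus F$ is a $\tau_k$-open neighborhood of $0$. Suppose $\tau_k$ were locally convex; then $U$ would contain a convex $\tau_k$-neighborhood $V$ of $0$. Since $(Te_n)$ is $\tau_k$-convergent to $0$ by Lemma~\ref{topology2}(iv), there is $N_0$ with $Te_n\in V$ for all $n\geqslant N_0$, whence $\mathrm{co}(Te_i:i\geqslant N_0)\subset V$ by convexity. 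As $(y_n)$ is a block sequence, $\supp(y_n)\subset[N_0,\infty)$ for all large $n$, so $Ty_n\in\mathrm{co}(Te_i:i\geqslant N_0)\subset V\subset U$ for such $n$---contradicting $Ty_n\in F=C(2^\nn)\setminus U$. Hence $\tau_k$ is not locally convex.

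The main obstacle is the bookkeeping of the transfer across $T$ together with pinning down exactly why the single neighborhood $U$ defeats local convexity: the mechanism is that $(Ty_n)$ is assembled from convex blocks of the $\tau_k$-null sequence $(Te_n)$, so convexity forces any neighborhood of $0$ to swallow the tail of $(Ty_n)$, even though that sequence stays bounded away from $0$ in $\tau_k$. I would stress that this route deliberately sidesteps the tempting but problematic argument ``the closed convex hull of the $\tau_k$-compact set $L$ would be $\tau_k$-compact,'' which presupposes a completeness (indeed a vector-space topology) that $\tau_k$ is not known to possess.
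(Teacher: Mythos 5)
Your proof is correct and takes essentially the same route as the paper's: transfer $Y_{k-1}$ into $C(2^\nn)$, let the image of the basis serve as the $k$-weakly compact set, use the convex block $\ell_1^k$-spreading model together with Proposition \ref{crit} to show the closed convex hull is not $k$-weakly precompact, and defeat local convexity via the observation that convex blocks of a $\tau_k$-null sequence must eventually enter any convex $\tau_k$-neighborhood of $0$. Your one genuine refinement is adjoining $0$ to the set $L$, which is in fact necessary (the paper's set $C=\{f_n:n\in\nn\}$ omits it): every $k$-weakly convergent subsequence of $(Te_n)$ has limit $0$, so without $0$ the set is only $k$-weakly precompact rather than $k$-weakly compact.
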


\begin{proof} Fix $k\in\nn$ and let $(f_n)_{n=1}^\infty$ be a sequence in $C(2^\nn)$ equivalent to the basis of $Y_{k-1}$. Let $C=\{f_n:n\in\nn\}$. Then $C$ is $k$-weakly compact, while its closed, convex hull is not. This is because $\text{co}(C)$ contains a weakly null $\ell_1^k$-spreading model, so $\text{co}(C)$ is not $k$-weakly precompact by Proposition \ref{crit}. The second statement of the corollary follows from the fact that $(f_n)_{n=1}^\infty\subset C(2^\nn)$ is convergent to $0$ in the $\tau_k$ topology but it admits a convex block sequence which is not convergent to $0$ in the $\tau_k$ topology.

\end{proof}

\section{Examples of $\xi$-Schur Banach spaces}

The canonical bases of $c_0$ and $\ell_p$ for $1<p<\infty$ are normalized and $1$-weakly null. Therefore these spaces fail to be $1$-Schur. Furthermore, any space which contains an isomorphic copy of one of these spaces also fails to be $1$-Schur.   On the other hand, $\ell_1$ is a Schur space. Therefore all classical Banach spaces lie on one extreme of the Schur hierarchy or the other.

It is well-known that any Schur Banach space must be $\ell_1$ saturated. In \cite{azimi}, the authors gave an example of an $\ell_1$ saturated Banach space which does not have the Schur property.  Fix $1=\alpha_1>\alpha_2>\ldots$ such that $\lim_n \alpha_n=0$ and $\sum_{n=1}^\infty \alpha_n=\infty$.  The authors of \cite{azimi} showed that the space $U$ which is the completion of $c_{00}$ under the norm $$\|x\|=\sup \Biggl\{\Bigl|\sum_{i=1}^n \alpha_i \sum_{j\in I_i} x_j\Bigr|: n\in\nn, I_1<\ldots <I_n, I_i\text{\ an interval}\Biggr\}$$   is $\ell_1$ saturated, the canonical basis $(e_i)_{i=1}^\infty$ of $U$ is weakly Cauchy, and $$\|\sum_{i\in E} e_{2i}-e_{2i-1}\|= \sum_{i=1}^{2|E|} \alpha_i$$ for any finite $E\subset \nn$.  Since $\lim_n \frac{1}{n}\sum_{i=1}^{2n}\alpha_i=0$, it follows that $(e_{2i}-e_{2i-1})_{i=1}^\infty$ is weakly null with no $\ell_1^1$-spreading model subsequence. Therefore the Azimi-Hagler space $U$ is $\ell_1$ saturated while not being $1$-Schur.

The Tsirelson space $T_{\xi, \vartheta}$, defined for $0<\xi<\omega_1$ and $0<\vartheta<1$, is the completion of $c_{00}$ with respect to the implicitly defined norm $$\|x\|= \max\Biggl\{\|x\|_{c_0}, \sup\Bigl\{\vartheta\sum_{i=1}^n \|I_i x\|: I_1<\ldots <I_n, (\min I_i)_{i=1}^n\in \mathcal{S}_\xi\Bigr\}\Biggr\}.$$   It is easy to see that $T_{\xi, \vartheta}$ is $\xi$-Schur. Indeed, if $(x_n)_{n=1}^\infty$ is a block sequence with $a=\inf\|x_n\|$ and if $I_1<I_2<\ldots$ are such that $\min I_i=\min \text{supp}(x_i)$ and $\max I_i=\max \text{supp}(x_i)$, $$\|\sum_{i\in E} a_ix_i\|\geqslant \vartheta \sum_{j\in E} \|I_j \sum_{i\in E}a_ix_i\| \geqslant \vartheta a\sum_{i\in E}|a_i|$$ for any $E\in \mathcal{S}_\xi$. From iterating this observation and using deeper combinatorial properties of the Schreier spaces, $T_{\xi, \vartheta}$ is $\xi n$-Schur for all natural number $n\in\nn$.  It is a consequence of  \cite[Theorem 5.19]{JuddOdell} that $T_{\xi, \vartheta}$ cannot have an $\ell_1^{\xi \omega}$-spreading model. Since $T_{\xi, \vartheta}$ is reflexive, $T_{\xi, \vartheta}$ fails to be $\xi\omega$-Schur. By standard properties of ordinals, if $\gamma$ is the maximum ordinal such that $\omega^\gamma\leqslant \xi$, $\xi\omega=\omega^{\gamma+1}$. Thus for every $\gamma<\omega_1$, we have found an example of a Banach space, namely $T_{\omega^\gamma, 1/2}$, which is $\zeta$-Schur for all $\zeta<\omega^{\gamma+1}$, but which fails to be $\omega^{\gamma+1}$-Schur.

For $0<\xi<\omega_1$, fix a sequence $\xi_n\uparrow \omega^\xi$ and a sequence $\vartheta_1>\vartheta_2>\ldots$ such that $\sum_{n=1}^\infty \vartheta_n <1$.  Let $Z_\xi$ be the completion of $c_{00}$ with respect to the norm $$\|x\|=\max\Bigl\{\|x\|_{c_0},\Bigl(\sum_{n=1}^\infty \|x\|_n^2\Bigr)^{1/2}\Bigr\},$$ where $$\|x\|_n= \sup\Bigl\{\vartheta_n\sum_{i=1}^t \|I_ix\|: I_1<\ldots <I_t, (\min I_i)_{i=1}^t \in \mathcal{S}_{\xi_n}\Bigr\}.$$    It was shown in \cite{CN} that $Z_\xi$ is $\zeta$-Schur for each $\zeta<\omega^\xi$, but $Z_\xi$ is not $\omega^\xi$-Schur.   Thus for every $\xi<\omega_1$, we have found a Banach space which is $\zeta$-Schur for every $\zeta<\omega^\xi$ and which fails to be $\omega^\xi$-Schur.   As remarked previously, for $\xi<\omega_1$ and $\omega^\xi<\zeta<\omega^{\xi+1}$, $\textsf{V}_\zeta=\textsf{V}_{\omega^\xi}$, these examples represent the sharpest possible control over such examples of spaces which are $\xi$-Schur for specified $\xi$.

\end{document}